    \newcommand{\BA}{{\mathbb {A}}} 
    \newcommand{\BC}{{\mathbb {C}}} 
     \newcommand{\BF}{{\mathbb {F}}}
    \newcommand{\BO}{{\mathbb {O}}} 
    \newcommand{\BQ}{{\mathbb {Q}}} \newcommand{\BR}{{\mathbb {R}}}
     \newcommand{\BZ}{{\mathbb {Z}}}
    \newcommand{\CA}{{\mathcal {A}}}
     \newcommand{\CH}{{\mathcal {H}}}
    \newcommand{\CO}{{\mathcal {O}}} 
    \newcommand{\CS}{{\mathcal {S}}}
    \newcommand{\fa}{{\mathfrak{a}}}
     \newcommand{\fP}{{\mathfrak{P}}}
     \newcommand{\fX}{{\mathfrak{X}}}
     \newcommand{\fZ}{{\mathfrak{Z}}}
    \newcommand{\End}{{\mathrm{End}}} \newcommand{\Eis}{{\mathrm{Eis}}}
    \newcommand{\Ker}{{\mathrm{Ker}}}
    \newcommand{\PGL}{{\mathrm{PGL}}}
    \renewcommand{\Re}{{\mathrm{Re}}}
    \newcommand{\Rat}{{\mathrm{Rat}}}
    \newcommand{\tr}{{\mathrm{tr}}}
    \newcommand{\wt}{\widetilde}
    \newcommand{\wh}{\widehat}
    \newcommand{\lra}{\longrightarrow}
    \newcommand{\ra}{\rightarrow} 
    \newcommand{\lto}{\longmapsto}\newcommand{\bs}{\backslash}
    \theoremstyle{plain}
    \newtheorem{thm}{Theorem}[section] \newtheorem{cor}[thm]{Corollary}
      \newtheorem{prop}[thm]{Proposition}
 \newtheorem{claim}[thm]{Claim}
\theoremstyle{remark} \newtheorem{remark}[thm]{Remark}
\theoremstyle{remark} 
\theoremstyle{remark} 
    \numberwithin{equation}{section}
    \newcommand{\cusp}{\mathrm{cusp}}
     \newcommand{\un}{{\mathrm{un}}}
\begin{document}

\title{Isolation of the Cuspidal Spectrum: the Function Field Case}

\author{Li Cai}
\address{Yau Mathematical Sciences Center\\
Tsinghua University\\
Beijing, 100084, People's Republic of China}
\email{lcai@mail.tsinghua.edu.cn}

\author{Bin Xu}
\address{School of Mathematics\\
Sichuan University\\
No. 29 Wangjiang Road\\ 
Chengdu, 610064, People's Republic of China}
\email{binxu@scu.edu.cn}


\begin{abstract} 
Isolating cuspidal automorphic representations from the whole automorphic spectrum is a basic problem in the trace formula approach.  
For example, matrix coefficients of supercupidal representations can be used as test functions for this, which kills the continuous spectrum, but also a large class of cuspidal automorphic representations. 
For the case of number fields, multipliers of the Schwartz algebra is used in the recent work \cite{Iso} to isolate all cuspidal spectrum which provide enough test functions and suitable for the comparison of orbital integrals.  
These multipliers are then applied to the proof of the Gan-Gross-Prasad conjecture for unitary groups \cite{Iso, BCZ20}. 
In this article, we prove similar result on isolating the cuspidal spectrum in \cite{Iso} for the function field case.
\end{abstract}

\subjclass[2010]{Primary 11F70; Secondary 11F72.}
\keywords{}
\thanks{L. Cai is partially supported by NSFC grant No.11971254. B. Xu is partially supported by NSFC grant No.11501382}

\maketitle

\section{Isolation of the cuspidal spectrum}\label{sec: intro}

The spectral expansion in (relative) trace formulae is a central but difficult problem. 
Simple trace formulae can largely simplify this problem. 
The traditional simple (relative) trace formulae use matrix coefficients
of supercuspidal representations as (local components of) test functions, which exclude many important cases. 
In \cite{LV}, Lindenstrauss and Venkatesh introduce a new type of simple trace
formula to prove the Weyl's law for spherical cusp forms on locally symmetric spaces associated to a split adjoint semisimple group $G$ over $\BQ$. 
Their approach is based on the observation that there are strong relations between the spectrum of the Eisenstein series at different places.

Recently, in \cite{Iso}, the authors develop a new technique for isolating components on the spectral side of the trace formula.
Precisely, they introduce an analogue of the Bernstein center at Archimedean places, and construct enough multipliers preserving matching of test functions by considering Schwartz functions, instead of smooth functions with compact supports. 
Using these multipliers, one can isolate the cuspidal spectrum without the full spectral decomposition, and establish the refined Gan-Gross-Prasad conjecture for a large class of representations (see also \cite{BCZ20}), which is also important to the work \cite{LTXZZ} on the Beilinson-Bloch-Kato conjecture for certain Rankin-Selberg motives.

The goal of this article is to give a proof of the result on isolating the cuspidal spectrum (see \cite[Theorem 1.1]{Iso}) for the function field case. 
Similar to the number field case, the result here is expected to be applied to general situation of trace formula approach over function fields. 
This work came to be through an effort to understand the paper \cite{Iso}, and will be the starting point of our project on the trace formula approach for the arithmetic problems over function fields.

Let $F$ be the function field of a smooth projective and geometrically connected curve over the finite field $\BF_q$.  
Denote by $\BA = \BA_F$ the adele ring of $F$.
Let $G$ be a connected reductive group over $F$ and $Z$ the center of $G$. 
Let $S_G$ be the set of all the primes of $F$ such that $G(F_v)$ is ramified. 
We fix a maximal compact subgroup $K_0$ of $G(\BA)$, and a Haar measure 
$\mathrm{d}g = \prod_v \mathrm{d}g_v$ on $G(\BA)$, such that $K_{0,v}$ is hyperspecial maximal with volume one under $\mathrm{d}g_v$ for every place $v$ not in $S_G$.

Take a unitary automorphic character $\omega: Z(F) \bs Z(\BA) \ra \BC^\times$. 
We define $L^2(G(F) \bs G(\BA))_\omega$
to be the $L^2$-completion of the space of smooth functions $\varphi$ on $G(\BA)$ satisfying
\begin{itemize}
	\item $\varphi(z\gamma g) = \omega(z) \varphi(g)$ for every $z \in Z(\BA)$, $\gamma \in G(F)$
		and $g \in G(\BA)$;
	\item $|\varphi|^2$ is integrable on $Z(\BA) G(F) \bs G(\BA)$.
\end{itemize}
Denote by $L^2_0(G(F) \bs G(\BA))_\omega$ the subspace of $L^2(G(F) \bs G(\BA))_\omega$ consisting of
functions $\varphi$ which are cuspidal, i.e. the constant term 
$$\varphi_P(g)=\int_{U(F)\bs U(\BA)}\varphi(ug)\,\mathrm{d}u$$
is zero for all proper parabolic subgroups $P$ of $G$, here $U$ is the unipotent radical of $P$.
The group $G(\BA)$ acts on $L^2(G(F) \bs G(\BA))_\omega$ via the right regular action $R$, 
and $L^2_0(G(F) \bs G(\BA))_\omega$ is closed under this action.
Denote by $C_c^\infty(G(\BA))$ the algebra (without an identity element) 
of smooth functions on $G(\BA)$ with compact supports. Then
the action $R$ induces an action of $C_c^\infty(G(\BA))$ on $L^2(G(F) \bs G(\BA))_\omega$ by
\[R(f) = \int_{G(\BA)} f(g)R(g)\,\mathrm{d}g, \quad (f \in C_c^\infty(G(\BA))).\]

Let $S$ be a set of places of $F$ containing $S_G$. 
Let $K \subset K_0$ be an open compact group of the form 
$K = \prod_{v \in S} K_v \times \prod_{v \not \in S} K_{0,v}=K_S\times K_0^{S}$.
Assume the character $\omega$ is invariant under the action of $K \cap Z(\BA)$.
Denote by $L^2(G(F) \bs G(\BA) /K)_\omega$ the subspace of $L^2(G(F) \bs G(\BA))_\omega$ 
invariant under the action of $K$ via $R$. 
Similarly, we have the space $L^2_0(G(F) \bs G(\BA)/K)_\omega$ consisting of cuspidal functions. 
Denote by $C_c^\infty(K \bs G(\BA) /K)$ the algebra of bi-$K$-invariant functions
in $C_c^\infty(G(\BA))$. Then $C_c^\infty(K \bs G(\BA) /K)$ acts on both $L^2(G(F) \bs G(\BA)/K)_\omega$
and $L^2_0(G(F) \bs G(\BA)/K)_\omega$ via $R$.

For every place $v \not\in S_G$, let $\CH_v = C_c^\infty(K_{0,v} \bs G(F_v)/K_{0,v})$ 
be the spherical Hecke algebra of $G_v$ with respect to $K_{0,v}$. 
Let $\CH^{(S)}$ be the restricted tensor product of $\CH_{v}$ for places $v \not\in S$.  
Then $\CH^{(S)}$ can be viewed as a subalgebra of $C_c^\infty(K \bs G(\BA) /K)$ by the embedding $f^{(S)} \mapsto 1_{K_S} \otimes f^{(S)}$, where $f^{(S)} \in \CH^{(S)}$, and $1_{K_S}$ is the characteristic function of $K_S$. 
In particular, the Hecke algebra $\CH^{(S)}$ acts on both $L^2(G(F) \bs G(\BA)/K)_\omega$ and 
$L^2_0(G(F) \bs G(\BA)/K)_\omega$ via $R$.

Let $\pi = \otimes_v \pi_v$ be an irreducible admissible representation of $G(\BA)$. 
Then the algebra $C_c^\infty(G(\BA))$ also acts on $\pi$ by
\[\pi(f) = \int_{G(\BA)} f(g)\pi(g)\,\mathrm{d}g, \quad (f \in C_c^\infty(G(\BA))).\]
Denote by $\pi^K$ the invariant subspace of $\pi$ under $K$. 
Then $C_c^\infty(K \bs G(\BA) /K)$ acts on $\pi^K$.

Assume that $\pi^K$ is non-zero. 
In particular, $\pi_v$ is unramified for all $v\notin S$. 
We call such a representation $\pi$ is {\em $(G,S)$-CAP} if there is a proper parabolic subgroup $P$ of $G$ and a cuspidal automorphic representation $\sigma$ of $M(\BA)$, where $M$ is the Levi
part of $P$, such that for all but finitely many places $v$ of $F$ not in $S$, the unramified 
representation $\pi_v$ is a constituent of $I_P^G(\sigma_v)$.  
Here, $I_P^G(\sigma_v)$ denotes the normalized parabolic induction of $\sigma_v$.

\begin{thm}\label{iso-cusp} 
	Suppose that $\pi$ is an irreducible admissible representation of $G(\BA)$ 
	which is not $(G,S)$-CAP. Then there exists a Hecke algebra element $\mu \in \CH^{(S)}$ such that 
	\begin{itemize}
		\item $R(\mu)$ maps $L^2(G(F) \bs G(\BA) /K)_\omega$ into $L^2_0(G(F) \bs G(\BA)/K)_\pi$.
		      Here, $L^2_0(G(F) \bs G(\BA)/K)_\pi$ is the $\pi$-nearly isotypic subspace of
		      $L^2_0(G(F) \bs G(\BA)/K)_\omega$, i.e. the direct sum of irreducible $G(\BA)$-sub-representations $\pi'$ such that $\pi'_v\simeq \pi_v$ for almost all places $v$ of $F$.
		\item $\pi(\mu)$ is the identity map on $\pi^K$.
	\end{itemize}
\end{thm}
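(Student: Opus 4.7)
The plan is to combine Langlands' spectral decomposition with the Satake isomorphism, converting the isolation of $\pi$ into an algebraic separation problem on the Satake varieties. This approach exploits two function-field-specific simplifications: the cuspidal subspace at any fixed level $K$ is \emph{finite-dimensional}, and only finitely many Eisenstein cuspidal data contribute at level $K$.

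First, I would invoke the spectral decomposition
\[
L^2(G(F)\bs G(\BA)/K)_\omega \;=\; L^2_0(G(F)\bs G(\BA)/K)_\omega \;\oplus\; \bigoplus_{j=1}^{r}\CV_j,
\]
where each $\CV_j$ gathers all Eisenstein (residual and continuous) contributions from a cuspidal datum $[M_j,\sigma_{0,j}]$ with $M_j$ a proper Levi of $G$. The parameter $\lambda$ inside $\CV_j$ ranges over the finite-type commutative algebraic group $\Lambda_j$ of unramified characters of $Z_{M_j}(\BA)/Z_{M_j}(F)$. The algebra $\CH^{(S)}$ acts via Satake: on each cuspidal irreducible $\pi'_i\subset L^2_0$ (finitely many, say $\pi'_1,\dots,\pi'_N$) by the scalar character $\chi_{\pi'_i}^{(S)}$, and on the $\lambda$-slice of $\CV_j$ by the induced Satake character of $I_{P_j}^G(\sigma_{0,j}\otimes\lambda)$. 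The non-CAP hypothesis applied to $(M_j,\sigma_{0,j}\otimes\lambda)$ for every $\lambda\in\Lambda_j$ yields two separations: $\chi_\pi^{(S)}\neq\chi_{\pi'_i}^{(S)}$ for each $i$, and $\chi_\pi^{(S)}$ is not the Satake character of $I_{P_j}^G(\sigma_{0,j}\otimes\lambda)$ for any $j,\lambda$.

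The construction of $\mu$ now proceeds by algebraic interpolation. For each cuspidal class $\pi'_i$, pick a place $v_i\notin S$ where $\chi_{\pi,v_i}\neq\chi_{\pi'_i,v_i}$ and, by polynomial interpolation on $X_{v_i}:=\Spec\CH_{v_i}$, produce $\mu^{(i)}\in\CH_{v_i}$ with $\hat\mu^{(i)}(\chi_{\pi,v_i})=1$ and $\hat\mu^{(i)}(\chi_{\pi'_i,v_i})=0$. For each Eisenstein datum $j$, define $V_{j,v}\subset\Lambda_j$ as the Zariski closed subvariety cut out by $\chi_{I_{P_j}^G(\sigma_{0,j}\otimes\lambda)_v}=\chi_{\pi,v}$. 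The non-CAP hypothesis forces $\bigcap_{v\notin S}V_{j,v}=\emptyset$, and the Noetherian property of $\Lambda_j$ produces a finite $T_j\subset F\setminus S$ with $\bigcap_{v\in T_j}V_{j,v}=\emptyset$; equivalently, $(\chi_{\pi,v})_{v\in T_j}$ lies outside the image of the morphism $\phi_{j,T_j}\colon\Lambda_j\to\prod_{v\in T_j}X_v$. This morphism factors as a translation of a morphism of algebraic tori followed by the finite Weyl-group quotient $T^\vee\to T^\vee/W$; both stages send closed subsets to closed subsets, so the image of $\phi_{j,T_j}$ is Zariski closed. Hence one can find a polynomial function on this variety, equivalently an element $\mu^{(j)}\in\bigotimes_{v\in T_j}\CH_v$, whose Satake transform vanishes on the image and evaluates to $1$ at $(\chi_{\pi,v})_{v\in T_j}$. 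Setting
\[
\mu \;:=\; \vol(K_S)^{-1}\cdot\prod_{i=1}^{N}\mu^{(i)}\cdot\prod_{j=1}^{r}\mu^{(j)}\;\in\;\CH^{(S)}
\]
then satisfies both required properties: $R(\mu)$ annihilates every other cuspidal near-equivalence class and every Eisenstein stratum, while $\pi(\mu)$ acts as the identity on $\pi^K$ by the normalization.

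The main obstacle is establishing the Zariski closedness of the image of $\phi_{j,T_j}$, since polynomial separation of a point outside a set requires the set to be closed; the argument above reduces this to standard facts (images of morphisms of algebraic tori are subtori, and finite morphisms are closed). The finiteness of Eisenstein cuspidal data at level $K$, much cleaner over function fields than over number fields, is precisely what removes the need for the archimedean Schwartz-algebra multipliers used in \cite{Iso}.
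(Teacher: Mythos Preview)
Your argument is correct, but it diverges from the paper's at the key step of annihilating the Eisenstein piece $L^2(G(F)\bs G(\BA)/K)_\fX$ for $\fX=[(M,\sigma)]$. The paper follows the Lindenstrauss--Venkatesh trick: it fixes two auxiliary places $v_{\infty,1},v_{\infty,2}\notin S$ of coprime degree so that $\BC[q_{v_{\infty,1}}^{\pm\lambda},q_{v_{\infty,2}}^{\pm\lambda}]=\BC[q^{\pm\lambda}]$, and for each $(w,w')\in W\times W$ chooses a further place $v_{w,w'}$ and explicit elements $T_{w,w'}\in\CH_{v_{w,w'}}$, $T_{w,w',\infty}\in\CH_{v_{\infty,1}}\otimes\CH_{v_{\infty,2}}$ with $T_{w,w',\infty}(\beta_{\mathbf v_\infty}+\lambda)=T_{w,w'}(\beta_{v_{w,w'}}+\lambda)$ for all $\lambda$, so that each difference $T_{w,w'}-T_{w,w',\infty}$ kills the entire family $\{I_P^G(\sigma_\lambda)\}_\lambda$. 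A suitable $W\times W$-symmetrized product then lies in $\CH^{(S)}$ and is nonzero on $\pi$. Your route is instead purely algebro-geometric: the non-CAP hypothesis plus Noetherianness of $\Lambda_j$ produce a finite $T_j$ with $(\chi_{\pi,v})_{v\in T_j}$ outside the image of $\phi_{j,T_j}$, and closedness of that image (translate of a subtorus followed by a finite quotient) lets you separate by a regular function. The paper's method is more explicit and mirrors the archimedean-multiplier construction of \cite{Iso} in the number-field case, which is the paper's stated goal; your method is shorter and avoids the $(|W|^2\times|W|^2)$-matrix combinatorics and the case split on whether $\alpha_{\mathbf v_\infty}^{(w,w')}-\beta_{\mathbf v_\infty}$ lies on the diagonal, at the cost of being nonconstructive about which places and which Hecke elements are used. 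For Step~2 (isolating $\pi$ among the finitely many cuspidal constituents via Harder's theorem) and for the finiteness of contributing cuspidal data (Corollary~\ref{cor of Harder}), the two arguments coincide. Two small imprecisions in your write-up: $\Lambda_j$ should be the group $X_{M_j}^G$ of unramified characters of $M_j(\BA)^1 Z(\BA)\bs M_j(\BA)$ rather than of $Z_{M_j}(\BA)/Z_{M_j}(F)$; and the separation $\chi_\pi^{(S)}\neq\chi_{\pi'_i}^{(S)}$ for cuspidal $\pi'_i$ not nearly equivalent to $\pi$ is by definition of near equivalence, not a consequence of the non-CAP hypothesis.
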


Comparing with the number field case, the proof of the above result is much more simpler. 
In the number field case, there are two main ingredients:
\begin{enumerate}
\item (Killing the continuous spectrum)
Let $M$ be the Levi part of a proper parabolic subgroup $P$ of $G$, and $\sigma$ be a cupsidal automorphic representation on $M(\BA)$. 
To kill the continuous spectrum coming from $\sigma$, one needs to work with Schwartz functions, and construct a (global) multiplier on the algebra of Schwartz functions to annihilate all the representations $I_P^G(\sigma_\lambda)$, where $\lambda$ runs over (a subspace of) the complex space $\fa_M^* = \Rat(M) \otimes_\BZ \BC$ (see the notation in Section \ref{sec:spec-decomp}).
Let $v$ be a non-Archimedean place such that $\sigma_v$ is unramified. For each spherical Hecke function $f \in \CH_v$, the function
\[\fa_M^*\ni \lambda \mapsto \tr (I_P^G(\sigma_{\lambda,v})(f))\]
is of $q_v$-exponential type by the Satake isomorphism. 
To construct the desired global multiplier, one has to show that there are enough multipliers at Archimedean places to match with the above functions of exponential type from non-Archiemdean places. 
This is the hardest part (see \cite[Theorem 2.11]{Iso}). 
\item (Isolating cuspidal representations) By a theorem of Donnelly (\cite{Don}), the Casimir eigenvalues of all cuspidal automorphic representations of $G(\BA)$ with given central character distributes discretely, so that one can isolate the spectrum using certain Weierstrass product from complex analysis (see \cite[\S 2.1]{Iso}). 
\end{enumerate}
While in the function field case, one has:
\begin{itemize}
\item[(1')] (Killing the continuous spectrum)
One can just work with Hecke algebras in the function field case.
For a global function field $F$, the cardinalities of residue fields of all its local fields are powers of a common number, i.e. the cardinality of the constant field. 
In particular, by the Satake isomorphism, for two places $v_1$ and $v_2$ with
	$q_{v_2}$ being a power of $q_{v_1}$,  the function
	\[\fa_M^*\ni\lambda \mapsto \tr (I_P^G(\sigma_{\lambda,v_2})(f)) \quad (f \in \CH_{v_2})\]
	should come from some spherical functions at $v_1$ (see 
	Claim \ref{claim T_{w,w'}} in Section \ref{sec: proof} for example).
	 Using this observation, we can construct the desired global Hecke algebra element which annihilates the continuous spectrum coming from $\sigma$. 
\item[(2')] (Isolating cuspidal representations) By a theorem of Harder (\cite{Harder}), all the cuspidal automorphic representations of $G(\BA)$ with given central character and level are finite, then one can isolate the spectrum using polynomials of Hecke algebra elements. 
\end{itemize}

We will present the case of $G=\PGL_2$ in Section \ref{sec: PGL2} to give an overview of the proof, and go to the general case in Section \ref{sec:spec-decomp} -- \ref{sec: proof}.

\subsection*{Acknowledgement} We thank Professor Ye Tian for his consistent encouragement, and thank Professor Yifeng Liu for reading the preliminary version of our manuscript. We also thank the anonymous referees for both the careful reading of our manuscript, and the very useful comments and suggestions. L. Cai is partially supported by NSFC grant No.11971254, and B. Xu is partially supported by NSFC grant No.11501382.

\section{Example: The case of $\PGL_2$}\label{sec: PGL2}

In this section, we prove Theorem \ref{iso-cusp} for $G = \PGL_2$ as an example. 
The main ingredients for the proof are the following:
\begin{itemize}
	\item {\it Spectral decomposition along the cuspidal supports:}
		we have the spectral decomposition of unitary $G(\BA)$-modules
		\[L^2(G(F) \bs G(\BA)) = \left( \bigoplus_\pi L_\pi^2 \right) \bigoplus \left( \bigoplus_{[\sigma]}
		L^2_{[\sigma]}\right).\]
	Here:
	\begin{itemize}
		\item $\pi$ runs over cuspidal automorphic representations of $G(\BA)$, and 
			$L_\pi^2$ is its $L^2$-completion.
		\item $[\sigma]$ runs over the equivalent classes of unitary automorphic characters $\sigma: F^\times \bs \BA^\times \ra \BC^\times$
	under the action $\sigma \mapsto \sigma^{-1}$ of the non-trivial element in the Weyl group $W$, and the action $\sigma \mapsto \sigma_\lambda = \sigma |\cdot|^\lambda$ $(\lambda \in \BC/(\frac{2\pi \sqrt{-1}}{\log q}) \BZ)$ of unramified  characters.
	The space $L^2_{[\sigma]}$ consists of Eisenstein series associated to the induced representations
	$I_P^G(\sigma_\lambda)$ with $\lambda \in \BC/(\frac{2\pi \sqrt{-1}}{\log q})\BZ$. 
	Here $P\subset \PGL_2$ is the parabolic subgroup consisting of upper-triangular matrices, and we view the character 
	$\sigma_\lambda$ as a representation on the Levi subgroup of $P$, that is, the subgroup of diagonal matrices. 
	See Section \ref{sec:spec-decomp} for the precise definition of $L^2_{[\sigma]}$ in general situation.
	\end{itemize}
	\item {\em Harder's theorem on the finiteness of cuspidal representations}: for any open compact subgroup $K$ of $G(\BA)$, the space $L_0^2(G(F) \bs G(\BA)/K)$ is of finite dimension (see Corollary \ref{cor of Harder} in \S \ref{sec:spec-decomp}). 
	\item {\em The Satake isomorphism}:
		for any place $v\notin S$ (here $S$ is the set of places of $F$ given in Section \ref{sec: intro}), consider the
		trace map
		\[\CH_v \lra C\left(\wh{G(F_v)}_\mathrm{un}\right), \quad f \mapsto \left(\pi \mapsto \tr (\pi(f))\right)\]
		where $\CH_v$ is the spherical Hecke algebra at $v$, $\wh{G(F_v)}_\mathrm{un}$ is the set of equivalent classes of unramified representations of $G(F_v)$ with the Fell topology, and $C\left(\wh{G(F_v)}_\mathrm{un}\right)$ is the space of continuous functions on $\wh{G(F_v)}_\mathrm{un}$. 
		The trace map factors through the Satake isomorphism (see Section \ref{sec: proof})
		\[\CS: \CH_v \stackrel{\sim}{\lra} \BC\left[T,T^{-1}\right]^W, \]
		so that if $\pi$ is a
		subquotient of $I_P^G(|\cdot|_v^\lambda)$, one has
		\[ \tr (\pi(f)) = (\CS f)(q_v^{\lambda},q_v^{-\lambda})\]
		for any $f \in \CH_v$.
		Here, $q_v$ is the cardinality of the residue field of $F_v$ and $|\cdot|_v$ is the normalized abstract value on $F_v$ which
		maps uniformizers to $q_v^{-1}$.
\end{itemize}

\paragraph{\bf Step 1: Killing the continuous spectrum}
We now apply the trick of Lindenstrauss-Venkatesh \cite{LV} for the function field case, which 
is based on the strong relation of an Eisenstein series at some different places.

Let $\pi = \otimes_v \pi_v$ be an irreducible admissible representation of $G(\BA)$. Let $K$ be an open compact 
subgroup of $G(\BA)$ such that $\pi^K$ is nonzero. Let $S$ be a finite place of $F$ such that $K$ is maximal outside $S$.
Assume $\pi$ is not $(G,S)$-CAP. 

There are only finitely many classes of characters $[\sigma]$, so that we may also assume that these $[\sigma]$'s are all unramified outside $S$. 
Here the finiteness comes from the finiteness of the divisor class number of $F$.
For higher rank groups, we need Harder's theorem on finiteness of cuspidal representations (see Theorem \ref{Harder's thm}).

Fix a pair of places
$$\mathbf{v_\infty} = \{v_{\infty,1}, v_{\infty,2}\}$$ 
of $F$ disjoint with $S$, such that 
\begin{equation}\label{isom of poly rings 1}
\BC[q_{\mathbf{v_\infty}}^{\pm \mathbf{\lambda}}]:=\BC[q_{v_{\infty,1}}^{\pm \lambda}, q_{v_{\infty,2}}^{\pm \lambda}] = \BC[q^{\pm \lambda}].
\end{equation}
For this, one may just consider two places $v_{i,\infty}$ ($i=1,2$) with coprime degrees.

Fix a  unitary representation $\sigma$ unramified outside $S$.
For simplicity, we introduce the following notation related to 
$\mathbf{v_\infty}$:
\begin{itemize}
\item[$\Diamond$] $X_i=\BC \big/\frac{2\pi \sqrt{-1}}{\log q_{\infty ,i}}\BZ$ ($i=1,2$), and $X=\BC \big/\frac{2\pi \sqrt{-1}}{\log q}\BZ$.
The Weyl group $W$ acts on $X_1$ and $X_2$ respectively 
so that $W \times W$ acts on $X_1 \times X_2$.
\item[$\Diamond$] Write $W=\langle 1, w\rangle$. 
Elements in $W\times W$ are indexed by $\wt w_1=(1,1)$, $\wt w_2=(w,1)$, $\wt w_3=(1,w)$ and $\wt w_4=(w,w)$.
\item[$\Diamond$] fix $\alpha_{\mathbf{v}_\infty}$ and $\beta_{\mathbf{v}_\infty}\in  X_1\times X_2$ to be such that $\pi_{\mathbf{v}_\infty}=I(|\cdot |_{\mathbf{v}_\infty}^{\alpha_{\mathbf{v}_\infty}})$ and $I(\sigma)_{\mathbf{v}_\infty}=I(|\cdot |_{\mathbf{v}_\infty}^{\beta_{\mathbf{v}_\infty}})$. Here and in the rest of this section, we denote $I(\cdot)=I_P^G(\cdot)$ for simplicity.
\end{itemize}
Now, denote
$\lambda_\infty := \lambda_\infty^{(i)} = 
\alpha_{\mathbf{v}_\infty}^{\wt w_i} - \beta_{\mathbf{v}_\infty} \in 
X_1 \times X_2$, and consider the set of places
$$\mathbf{v}_f = \{v_1,v_2,v_3,v_4\},$$ 
disjoint with $S\cup \mathbf{v}_\infty$, such that for each $i=1,\cdots,4$:
	\begin{enumerate}
		\item[(i)] if $\lambda_\infty \in \Delta$, 
		we take $v_i$ to be such that $\pi_{v_i}$ is not a
		subquotient of  $I(\sigma_{\lambda^\sharp_\infty})_{v_i}.$ 		
		Here $\Delta$ denotes the image of the diagonal map $X\lra X_1\times X_2$ given by $\lambda\mapsto (\lambda, \lambda)$, $\lambda^\sharp_\infty$ denotes any lifting of $\lambda_\infty$ in $X$ via the diagonal map, and $\alpha_{\mathbf{v}_\infty}^{\wt w_i}$ denotes the image of $\alpha_{\mathbf{v}_\infty}$ under
		the action of $\wt{w_i}$.
		\item[(ii)] if $\lambda_\infty \not\in \Delta$, we take $v_i$ to be such that
$\pi_{v_i} \not= I(\sigma)_{v_i}$.
	\end{enumerate}	
Note that the above $v_i$'s exist since $\pi$ is not $(G,S)$-CAP.

For each $i$, we take a generator $T_i \in \BC[q_{v_i}^{\pm \lambda}]^W\simeq \CH_{v_i}$. 
We also fix $\alpha_i$ and $\beta_i\in X_{v_i}$ such that $\pi_{v_i}=I(|\cdot|_{v_i}^{\alpha_i})$ and $I(\sigma)_{v_i}=I(|\cdot|_{v_i}^{\beta_i})$, respectively.
One has the following claim: 
\begin{claim}\label{claim T_i}
Denote by $\underline\lambda$ the multi-variable $(\lambda_1,\lambda_2)$. For each $i=1,\ldots, 4$, there exists $T_{i,\infty}\in \BC[q_{\mathbf{v_\infty}}^{\pm \mathbf{\underline\lambda}}]:=\BC[q_{v_{1,\infty}}^{\pm \lambda_1}, q_{v_{2,\infty}}^{\pm\lambda_2}]$ such that
\[T_{i,\infty}(\beta_{\mathbf{v}_\infty}+\lambda) = 
T_i(\beta_i+\lambda)\]
for all $\lambda\in X$,
and 
\[T_{i,\infty}(\alpha_{\mathbf{v}_\infty}^{\wt w_i})
\not= T_i(\alpha_i).\]
\end{claim}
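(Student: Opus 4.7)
The plan is to construct $T_{i,\infty}$ in two stages: first produce some $T_{i,\infty}^{(0)}$ meeting the diagonal equality, then correct it if necessary by adding a term vanishing on the translated diagonal so that the inequality at $\alpha_{\mathbf{v}_\infty}^{\wt w_i}$ also holds.

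For the first stage I would transport the data through the Satake isomorphism. Since $T_i \in \CH_{v_i}$ is a Weyl-invariant Laurent polynomial in $q_{v_i}^{\pm\lambda}$, the function $\lambda \mapsto T_i(\beta_i + \lambda)$ is itself a Laurent polynomial in $q_{v_i}^\lambda$. As $q_{v_i}$ is a power of $q$, it lies in $\BC[q^{\pm\lambda}]$, which by the coprimality hypothesis \eqref{isom of poly rings 1} equals $\BC[q_{v_{\infty,1}}^{\pm\lambda}, q_{v_{\infty,2}}^{\pm\lambda}]$. Writing components $\beta_{\mathbf{v}_\infty} = (\beta_{\infty,1},\beta_{\infty,2})$, B\'{e}zout applied to the degrees of $v_{\infty,1}, v_{\infty,2}$ yields some (non-unique) $P\in\BC[x_1^\pm, x_2^\pm]$ with $T_i(\beta_i+\lambda) = P\bigl(q_{v_{\infty,1}}^{\beta_{\infty,1}+\lambda},\, q_{v_{\infty,2}}^{\beta_{\infty,2}+\lambda}\bigr)$ for all $\lambda\in X$. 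Setting $T_{i,\infty}^{(0)}(\lambda_1,\lambda_2) := P\bigl(q_{v_{\infty,1}}^{\lambda_1}, q_{v_{\infty,2}}^{\lambda_2}\bigr)$ then gives an element of $\BC[q_{v_{\infty,1}}^{\pm\lambda_1}, q_{v_{\infty,2}}^{\pm\lambda_2}]$ satisfying the required first equality.

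For the second stage I would split according to whether $\lambda_\infty\in\Delta$. In case (i), substituting $\lambda = \lambda^\sharp_\infty$ in the first equality yields $T_{i,\infty}^{(0)}(\alpha_{\mathbf{v}_\infty}^{\wt w_i}) = T_i(\beta_i+\lambda^\sharp_\infty)$; since $\beta_i+\lambda^\sharp_\infty$ is a representative of the Satake parameter of $I(\sigma_{\lambda^\sharp_\infty})_{v_i}$ and $\pi_{v_i}$ is not a subquotient of the latter, the Weyl orbits of $\alpha_i$ and $\beta_i+\lambda^\sharp_\infty$ differ. Since $T_i$ is a generator of $\BC[q_{v_i}^{\pm\lambda}]^W$ (for $\PGL_2$, e.g.\ $q_{v_i}^\lambda + q_{v_i}^{-\lambda}$), it separates distinct Weyl orbits, so $T_i(\alpha_i)\neq T_i(\beta_i+\lambda^\sharp_\infty)$ and $T_{i,\infty}:=T_{i,\infty}^{(0)}$ already works. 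In case (ii), if the inequality fails for $T_{i,\infty}^{(0)}$, I would add $c\cdot h$ where $h\in\BC[q_{v_{\infty,1}}^{\pm\lambda_1}, q_{v_{\infty,2}}^{\pm\lambda_2}]$ vanishes on the translated diagonal $\beta_{\mathbf{v}_\infty}+\Delta$ but is nonzero at $\alpha_{\mathbf{v}_\infty}^{\wt w_i}$; after a translation, such an $h$ comes from the generator $x_1^{d_{\infty,2}} - x_2^{d_{\infty,1}}$ of the ideal of the diagonal curve in $\Gm^2\cong X_1\times X_2$ (with $d_{\infty,j}$ the degree of $v_{\infty,j}$), and the off-diagonal hypothesis on $\lambda_\infty$ guarantees $h(\alpha_{\mathbf{v}_\infty}^{\wt w_i})\neq 0$. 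Choosing $c\in\BC$ appropriately then removes any remaining coincidence of values.

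The main obstacle I anticipate is the bookkeeping in case (i): one must verify that the non-subquotient condition at $v_i$ forces the two Satake parameters to lie in distinct Weyl orbits, which rests on the fact that an unramified irreducible representation of $\PGL_2(F_{v_i})$ is determined up to isomorphism by its Satake parameter (with a separate check when the relevant principal series is reducible). The off-diagonal case is algebraically clean once the ideal of the diagonal curve inside the two-torus is identified as principal.
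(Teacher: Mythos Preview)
Your proposal is correct and follows essentially the same two-case strategy as the paper: in case (i) you use \eqref{isom of poly rings 1} to lift $T_i(\beta_i+\cdot)$ to two variables and then invoke the generator property of $T_i$ exactly as the paper does, and in case (ii) you correct by a function vanishing on the translated diagonal but nonzero at $\alpha_{\mathbf{v}_\infty}^{\wt w_i}$, which is precisely the paper's reduction (the paper phrases it as hitting the specific target value $T_i(\beta_i)$ rather than merely avoiding $T_i(\alpha_i)$, but the underlying mechanism is identical). Your explicit identification of the diagonal ideal as principal with generator $x_1^{d_{\infty,2}}-x_2^{d_{\infty,1}}$ and your B\'ezout justification for Stage~1 simply make explicit what the paper leaves as ``such $T_{i,\infty}$ exists since $\BC[q_{\mathbf{v_\infty}}^{\pm\underline\lambda}]$ is the coordinate ring of $X_1\times X_2$''.
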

\begin{proof}
Assume first $\lambda_\infty = \alpha_{\mathbf{v}_\infty}^{\wt w_i} - \beta_{\mathbf{v}_\infty}   \in \Delta$. In this case, recall that we take
$v_i$ such that
$\pi_{v_i}$ is not a subquotient
of $I(\sigma_{\lambda^\sharp_\infty})_{v_i}$.
By \eqref{isom of poly rings 1}, there exsits $T_{i,\infty}$ satisfying 
\[T_{i,\infty}(\beta_{\mathbf{v}_\infty}+\lambda) = T_i(\beta_i+\lambda)\]
for all $\lambda\in X$.
By construction, one has
\[T_{i,\infty}(\alpha_{\mathbf{v}_\infty}^{\wt w_i}) = T_{i,\infty}(\beta_{\mathbf{v}_\infty} + \lambda^\sharp_\infty) = T_i(\beta_i+\lambda^\sharp_\infty)
\not= T_i(\alpha_i),\]
 as $T_i$ is a generator in $\BC[q_{v_i}^{\pm \lambda}]^W$.

Now assume $\lambda_\infty := \alpha_{\mathbf{v}_\infty}^{\wt w_i} - \beta_{\mathbf{v}_\infty}  \not\in \Delta$. In this case, recall that we take $v_i$ such that
$\pi_{v_i} \not= I(\sigma)_{v_i}$.
Since we can find $T_{i,\infty}$ such that $T_{i,\infty}(\beta_{\mathbf{v}_\infty}+\lambda) = 
T_i(\beta_i+\lambda)$ for all $\lambda$, 
it is enough to find $T_{i,\infty}$ satisfying moreover 
\[T_{i,\infty}(\alpha_{\mathbf{v}_\infty}^{\wt w_i}) = T_i(\beta_i),\]
and this can be reduced to find $T_{i,\infty}$ such that
$$T_{i,\infty}(\alpha_{\mathbf{v}_\infty}^{\wt w_i}) \not= 0,$$
and $T_{i,\infty}(\beta_{\mathbf{v}_\infty} +\lambda) = 0$ for all $\lambda$.
Since we assume $\alpha_{\mathbf{v}_\infty}^{\wt w_i} - \beta_{\mathbf{v}_\infty}  \not\in \Delta$, such $T_{i,\infty}$ exists since $\BC[q_{\mathbf{v_\infty}}^{\pm \underline{\lambda}}]$ is the coordinate ring of $X_1\times X_2$.
\end{proof}

We construct the following ($4\times 4$)-matrix:
\[
\left[\begin{matrix}
T_{1,1} = T_1 - T_{1,\infty} & T_{1,2} = T_1 - T_{1,\infty}^{\wt w_2} & 
T_{1,3} = T_1 - T_{1,\infty}^{\wt w_3} & T_{1,4} = T_1 - T_{1,\infty}^{\wt w_4} \\
&&&\\
T_{2,1} = T_2 - T_{2,\infty} & T_{2,2} = T_2 - T_{2,\infty}^{\wt w_2} & 
T_{2,3} = T_2 - T_{2,\infty}^{\wt w_3} & T_{2,4} = T_2 - T_{2,\infty}^{\wt w_4} \\
&&&\\
T_{3,1} = T_3 - T_{3,\infty} & T_{3, 2} = T_3 - T_{3,\infty}^{\wt w_2} & 
T_{3,3} = T_3 - T_{3,\infty}^{\wt w_3} & T_{3,4} = T_3 - T_{3,\infty}^{\wt w_4} \\
&&&\\
T_{4,1} = T_4 - T_{4,\infty} & T_{4,2} = T_4 - T_{4,\infty}^{\wt w_2} &
T_{4,3} = T_4 - T_{4,\infty}^{\wt w_3} & T_{4,4} = T_4 - T_{4,\infty}^{\wt w_4}
\end{matrix}\right].\]
In the above, $T_{i,\infty}^{\wt w_j} := 
T_{i,\infty} \circ \wt w_j$.
Any element in the first column of the above matrix kills the continuous spectrum $I(\sigma_\lambda)$ for all $\lambda$ by Claim \ref{claim T_i}. 
Also, for the elements in the diagonal of the above matrix, their values $(\alpha_i,\mathbf{\alpha_{\mathbf{v}_\infty}})$ are all non-zero.
Denote multi-variables $\underline\lambda_f=(\lambda_1, \lambda_2, \lambda_3, \lambda_4)$, and also $\underline\lambda_\infty=(\lambda_{1,\infty}, \lambda_{2,\infty})$.
Then there exists constants $C_1,\ldots,C_4
		\in \BC$ such that for each $w$, if we set (the weighted sum of columns)
		\[T_{k} = \sum_{i=1}^4 C_i\cdot  T_{i,k} \in \BC\left[q_{\mathbf{v}_f}^{\pm \underline\lambda_f},
		q_{\mathbf{v}_\infty}^{\pm \underline\lambda_\infty}\right],\]
		then all these $T_k$'s ($k=1,\ldots ,4$) are non-zero at 
		$(\alpha_{\mathbf{v}_f}, \alpha_{\mathbf{v}_\infty})$, here $\alpha_{\mathbf{v}_f}=(\alpha_1,\alpha_2,\alpha_3,\alpha_4)$. Moreover
		\[T = \prod_{k=1}^4 T_{k} \in \BC\left[q_{\mathbf{v}_f}^{\pm \underline\lambda_f},
		q_{\mathbf{v}_\infty}^{\pm \underline\lambda_\infty}\right]^{\underline{W}}.\]
In the above notation, $\underline{W} = \prod_{v \in \mathbf{v}_\infty\cup\mathbf{v}_f} W$ acts on each variable respectivly.

Then it is clear that such $T$ annihilates the the continuous spectrum $I_P^G(\sigma_\lambda)$ for all $\lambda$'s but preserves $\pi$. 
As there are finitely many $[\sigma]$'s, a finite product of such Hecke elements $T$ kills the space orthogonal to $L^2_0(G(F) \bs G(\BA)/K)$ in $L^2(G(F) \bs G(\BA)/K)$, but does not kill $\pi$.

\paragraph{\bf Step 2: Isolating $\pi$}
By Harder's theorem, there are only finitely many cuspidal representations in the cuspidal spectrum $L^2_0(G(F) \bs G(\BA)/K)$. 
Denote by $\pi_1,\ldots,\pi_n$ the cuspidal representations which are not nearly equivalent to $\pi$. 
In particular, for $\pi_1$, there is a place $v_1$ of $F$ outside the union of $S$ and $\cup_{[\sigma]}S_\sigma$, such that $\pi_{1,v_1} \not\cong \pi_{v_1}$. Here $S_\sigma$ is a finite set of places such that the Hecke algebra element used to kill the Eisenstein part $L^2_{[\sigma]}$ above lies in $\CH_{S_\sigma}$, and $[\sigma]$ runs over all such equivalence classes. 
Equivalently, we have $\tr(\pi_{1,v_1}) \not= \tr(\pi_{v_1})$, and hence the Hecke algebra element $\left[T_{v_1} - \tr(\pi_{1,v_1}(T_{v_1}))\right]$ kills $\pi_1$, but does not kill $\pi$. 
Here $T_{v_1}$ is a generator in the Hecke algebra $\CH_{v_1}$.
Continue this procedure for $\pi_2, \ldots, \pi_n$, we can construct a Hecke algebra element which kills all the cuspidal representations not nearly equivalent to $\pi$ in the spectrum, but does not kill $\pi$. 
This finishes the proof.

\begin{remark}\label{remark 2.1} 
In \cite{YZ}, the authors kill the continuous part in the case that $G = \PGL_2$ and $K$ is maximal, by employing the so-called {\it Eisenstein ideal} in Hecke algebra. 
Take $S=\emptyset$, and let $\CH = \otimes_v \CH_v$ be the spherical Hecke algebra of $G$ with respect to $K$. 
Consider the ring homomorphisms (See \cite[(4.1)]{YZ})
\[a_{\Eis}: \CH \stackrel{\CS}{\lra} \BC[A(\BA)/A(\BO)] \lra \BC[A(F) \bs A(\BA)/A(\BO)],\]
where $A$ is the diagonal subgroup of $G$, and $\BO = \prod_v \CO_v$. 
The image of $a_\Eis$ can be described clearly, which is the subspace of $\BC[A(F) \bs A(\BA) /A(\BO)]$ invariant under the involution from the Weyl group $W$ of $G$ (see \cite[Lemma 4.2 (2)]{YZ}). 
The Eisenstein ideal $I_\Eis$ of $\CH$ is then defined to be the kernel of $a_\Eis$. 
By the spectral decomposition of $L^2(G(F) \bs G(\BA) / K)$, and the fact that the characters of Eisenstein series factor through $a_\Eis$ (that is,
$\tr (I_P^G \chi) = \chi \circ a_\Eis$ for any unramified Hecke character $\chi$ on $A(\BA)$), any element in $I_\Eis$ kills the continuous spectrum, and vice versa. 
On the other hand, the ideal $I_\Eis$ is large enough in the sense that for any cuspidal automorphic representation $\pi$ of $G(\BA)$ which is unramified everywhere, there exists an element $f \in I_\Eis$ such that $\tr (\pi(f)) \not= 0$. 
In fact, if not, $I_\Eis$ will be contained in the kernel of $\tr \pi$ so that $\tr \pi$ factors though $a_\Eis$. 
Since the image of $a_{\Eis}$ is the $W$-invariant subspace of
$\BC[A(F)\bs A(\BA)/A(\CO)]$, $\tr \pi$ is given by a ($W$-orbit of) Hecke character $\chi$ on $A(\BA)/A(\CO)$. 
This is impossible since $\pi$ is not $(G,\emptyset)$-CAP. 
Moreover, based on the above, one can obtain an element $\mu \in I_\Eis$ satisfying the conditions in Theorem \ref{iso-cusp}, by applying Harder's theorem.

Comparing with the strategy above, for a given cuspidal automorphic representation $\pi$, in this section we construct an explicit element $\mu_\pi\in I_\Eis$ isolating $\pi$ from $L^2(G(F) \bs G(\BA)/K)$. 
In particular, $\mu_\pi$ depends on $\pi$, while $I_\Eis$ does not.
\end{remark}

\begin{remark}\label{remark 2.2}
We discuss a possible generalization of the above Eisenstein ideal for a general reductive group $G$ and any level $K \subset
G(\BA)$. Let $\omega$ be a unitary Hecke character on the center $Z$ of $G$ and 
assume $\omega$ is invariant under $Z(\BA) \cap K$. Let
$S$ be a finite set of places such that $K^{(S)}$ is maximal. For each standard Levi subgroup $M$ of $G$ (after fixing a Borel subgroup of $G$), denote by 
$\CH_M^{(S)} = \otimes_{v \not\in S} \CH_{M,v}$ the spherical Hecke algebra of $M$ outside $S$. Similar to the Satake transform (for the minimal Levi),
for each $v \not\in S$, consider the map
\[\CS_{M,v}: \CH_v \lra \CH_{M,v}, \quad f_v \mapsto \left(m\mapsto \delta_{P_M}(m)^{1/2} \int_{N(F_v)} f_v(m n)\,\mathrm{d}n\right).\]
Then for each irreducible unramified representation $I_{P}^G \sigma_v$ of $G(F_v)$, one has
\[\tr (I_P^G(\sigma_v)(f_v)) = \tr (\sigma_v(\CS_{M,v}(f_v))), \quad (f_v \in \CH_v).\]
Denote by $\CS_M = \otimes_{v \not\in S} \CS_{M,v}$, and consider the following map
\[a_M: \CH^{(S)} \stackrel{\CS_M}{\lra} \CH_M^{(S)} \lra \End_{\CH_M^{(S)}}\left( \CA_\cusp(M(\BA)/K \cap M(\BA))_\omega \right).\]
Here, $\CA_\cusp(M(\BA)/K \cap M(\BA))_\omega$ is the space of cusp forms $\varphi$ on $M(\BA)$ 
such that
\[\varphi(zgk) = \omega(z)\varphi(g), \quad \forall z \in Z(\BA),g \in G(\BA), k \in K \cap M(\BA).\]
Denote by $I_M$ the kernel
of $a_M$. Consider the following ideal of $\CH^{(S)}$:
\[I_\Eis = \bigcap_M I_M.\]
Then by the spectral decomposition of $L^2(G(F) \bs G(\BA)/K)_\omega$, 
any element in $I_\Eis$ kills the continuous spectrum, and vice versa. 
On the other hand, one needs to know that the ideal $I_\Eis$ is large enough in the sense that for any irreducible admissible representation $\pi$ on $G(\BA)$ with $\pi^K \not=0$ which is not
$(G,S)$-CAP, there exists an element $f \in I_\Eis$ such that $\pi(f) \not= 0$. 
One may prove this by studying the image of $a_M$ for each $M$ as in Remark \ref{remark 2.1}, but it seems more involved.  
However, the property that $I_\Eis$ is large enough will follow from Theorem \ref{iso-cusp} immediately, which ensures that there is a $\mu \in I_\Eis$ such that $\pi(\mu) = 1$.
\end{remark}

\section{Spectral decomposition along the cuspidal data}\label{sec:spec-decomp}

In this section, we recall the spectral decomposition of $L^2(G(F) \bs G(\BA))_\omega$ along the cuspidal supports in the case of function field, following \cite{MW}. 

For convenience, we list some notation first, which will be used in the remaining parts of this note.
\begin{itemize}
	\item Let $P_0$ be a fixed minimal parabolic subgroup of $G$ defined over $F$, with Levi subgroup $M_0$.
		A subgroup $M$ of $G$ is called a standard Levi subgroup if there exists a parabolic subgroup of $G$ containing $P_0$, of which $M$ is the unique Levi subgroup containing $M_0$. 
	\item Let $T_0$ be the maximal split torus in the center of $M_0$. For any standard Levi subgroup of $G$, let $T_M$ be the maximal split torus in the center of $M$, which is contained in $T_0$.	
	\item Fix a maximal compact subgroup $K_0\subset G(\BA)$ such that 
	\begin{itemize}
	\item $G(\BA)=P_0(\BA)K_0$;
	\item for every standard parabolic subgroup $P=MU$, $P(\BA)\cap K_0=(M(\BA)\cap K_0)(U(\BA)\cap K_0)$, and $M(\BA)\cap K_0$ is a maximal compact subgroup of $M(\BA)$.
	\end{itemize}
	The choice of $K_0$ fixes a choice of maximal compact subgroup of $M(\BA)$ for every standard Levi $M$.
	\item Denote by $\mathrm{Rat}(M)$ the group of rational characters of $M$. Then denote $\Re({\fa_M^*})=\mathrm{Rat}(M)\otimes_\BZ \BR$, and $\fa_M^* = \mathrm{Rat}(M) \otimes_\BZ \BC$.
	\item For $\chi \in \mathrm{Rat}(M)$, denote by $|\chi|$ the continuous character on $M(\BA)$
	      given by 
	      \[|\chi|(m) = \prod_v |\chi_v(m_v)|_v, \quad (m = (m_v)_v \in M(\BA))\] 
	      where $\chi_v: M(F_v) \ra F_v^\times$ is the algebraic character induced by $\chi$. Then define 
	      \[M(\BA)^1 = \bigcap_{\chi \in \Rat(M)} \Ker |\chi|.\]
	\item Denote by $X_M$ the group of characters on $M(\BA)^1  \bs M(\BA)$, which can be realized
		as a quotient of $\fa_M^*$. 
		In fact, let $\chi_1,\ldots,\chi_r$ be a $\BZ$-basis of $\mathrm{Rat}(M)$, the map 
			\[j : M^1(\BA)\bs M(\BA)\lra \left(q^\BZ\right)^r,\quad m\mapsto \left(|\chi_1|(m), \ldots , |\chi_r|(m)\right)\]
			defines a topological group isomorphism onto its image, which is a subgroup of $(q^\BZ)^r$ with finite index.
		Then 
		\[\kappa: \fa_M^* \lra X_M, \quad \chi_i \mapsto |\chi_i|\]
		is a surjective morphism of groups, and the kernel of $\kappa$ is of the form 
		$(\frac{2\pi i}{\log q})L$, where $L$ is a lattice of $\Rat(M) \otimes_\BZ \BQ$. We also denote $\Re (X_M)=\kappa(\Re (\fa_M^*))$, and $\kappa$ induces an isomorphism $\Re (\fa_M^*)\simeq \Re (X_M)$.
	\item Denote by $X_M^G$ the subgroup of $X_M$ with characters trivial on $Z(\BA)$ (recall that $Z=Z_G$, the center of $G$). In particular, 
		there is a perfect pairing 
		\begin{equation}\label{pairing for X_M^G}X_M^G \times M(\BA)^1 Z(\BA) \bs M(\BA) \lra \BC^\times.\end{equation}
	\item For standard Levi subgroups $M\subset M'$ of $G$, denote by $\Re ((\fa_M^{M'})^*)$ the real vector subspace of $\Re (\fa_M^*)$ generated by $R(T_M, M')$, the set of roots (see \cite[\S I.1.6]{MW}) of $M'$ relative to $T_M$.
	Identifying $\Re(\fa_{M'}^*)$ with a real vector subspace of $\Re (\fa_M^*)$ by restriction, we have 
	\[\Re (\fa_M^*)= \Re (\fa_{M'}^*)\oplus  \Re((\fa_M^{M'})^*).\]
	Moreover, the elements of $\Re((\fa_M^{M'})^*)$ can be identified with the elements of $\Re (X_M)$ which are trivial on the center of $M'(\BA)$.
	After tensor product by $\BC$, one also has the decomposition
	\begin{equation}\label{decomposition of a^*_M}
		\fa_M^*=\fa_{M'}^*\oplus (\fa_M^{M'})^*,
	\end{equation}
	where $ (\fa_M^{M'})^*=\Re ((\fa_M^{M'})^*)\otimes _\BR \BC$.
	\item For a compact open subgroup $K\subset G(\BA)$ such that $G(\BA)=P(\BA)K$, one defines a map 
	$$m_P: G(\BA)\lra M^1(\BA)\bs M(\BA)$$
	by $m_P(g)=M^1 m$ if $g=muk$ with $u\in U(\BA)$, $m\in M(\BA)$ and $k\in K$.

\end{itemize}

We recall some notions on automorphic forms and automorphic representations.
Let $P=MU$ be a standard parabolic subgroup. 
We call a smooth (locally constant) function 
\begin{equation}\label{auto form}\varphi: U(\BA)M(F)\bs G(\BA)\lra \BC\end{equation}
an {\it automorphic form} if 
\begin{itemize}
	\item[(i)] $\varphi$ is of moderate growth;
	\item[(ii)] $\varphi$ is $K_0$-finite;
	\item[(iii)] $\varphi$ is $\fZ(G(F_v))$-finite for any place $v$ of $F$. Here, $\fZ(G(F_v))$ is the Bernstein
		center (see \cite{BD}) of $G(F_v)$. 
\end{itemize}
We denote the space of all such automorphic forms by $\CA(U(\BA)M(F) \bs G(\BA))$. 
For a unitary automorphic character $\omega: Z(F)\bs Z(\BA) \lra \BC^\times$, we also denote by $\CA(U(\BA)M(F) \bs G(\BA))_\omega$ the automorphic forms $\varphi$ with central character $\omega$, i.e. $\varphi(zg)=\omega(z)\varphi(g)$ for all $z\in Z(\BA)$.
We say $\varphi$ is {\it cuspidal} if for all parabolic subgroups $P'$ with $P_0\subset P'\subsetneq P$, its constant term along $P'$ is zero.
The space of cuspidal automorphic forms on $U(\BA)M(F)\bs G(\BA)$ is denoted by $\CA_0(U(\BA)M(F)\bs G(\BA))$.

Moreover, for any $k\in K_0$, we define $\varphi_k: M(F)\bs M(\BA)\lra \BC$ by 
\[\varphi_k(m) = m^{-\rho_P} \varphi(mk),\]
where $\rho_P$ is the half-sum of roots of $M$ in the Lie algebra of $U$.
Then a smooth function \eqref{auto form} is an automorphic form if it is $K_0$-finite and for all $k\in K_0$, $\varphi_k$ is an automorphic form on $M(F)\bs M(\BA)$ (\cite[\S I.2.17]{MW}).

The spectral decomposition is given by Eisenstein series associated to different cuspidal data. 
We set some more notation:
\begin{itemize}
\item Denote by $\Pi_0(M(\BA))$ the set of cuspidal automorphic representations $\sigma$ of $M(\BA)$, i.e., the set of equivalence classes of irreducible subquotients of the space of cusp forms $\CA_0(M(F)\bs M(\BA))$. 
\item For any unitary automorphic character $\omega: Z(F)\bs Z(\BA)\lra \BC^\times$, let $\Omega_M(\omega)$ be the set of unitary automorphic characters $\omega_M: Z_M(F)\bs Z_M(\BA)\lra \BC^\times$ such that $\omega_M|_{Z(\BA)}=\omega$.
\item Denote by $\Pi_0(M(\BA))_\omega$ the subspace of $\Pi_0(M(\BA))$ consisting of cuspidal automorphic representations with central character $\omega_M\in \Omega_M(\omega)$.
\item For $\sigma \in \Pi_0(M(\BA))_\omega$, denote by $\CA(M,\sigma)$ the subspace consisting of $$\varphi \in \CA(U(\BA)M(F) \bs G(\BA))_\omega$$ such that $\varphi_k \in \CA(M(F) \bs M(\BA))_{\sigma}$ for all $k \in K_0$, here $\CA(M(F) \bs M(\BA))_{\sigma}$ is the isotypic submodule of $\sigma$ in $\CA(M(F) \bs M(\BA))$.
\end{itemize}
The group $X_M^G$ acts on the space $\Pi_0(M(\BA))_\omega$ via 
$$\sigma\lto \sigma_\lambda =  \sigma\otimes \lambda$$ 
       with $\lambda \in X_M^G$ and $\sigma \in \Pi_0(M(\BA))_\omega$.
We say $\sigma$ is equivalent to $\sigma'$ if there exisits $\lambda\in X_M^G$ such that $\sigma_\lambda\simeq \sigma'$, and denote such an equivalent class by $\fP$.
A cuspidal datum (of central character $\omega$) is a pair $(M,\fP)$ where $M$ is a standard Levi of $G$, and $\fP$ is an equivalence class of $\sigma\in \Pi_{0}(M(\BA))_\omega$ as above.
Two cuspidal data $(M,\fP)$ and $(M',\fP')$ are called equivalent if there exists some $w \in G(F)$ such that 
$w\cdot M = M'$ and $w\cdot\fP = \fP'$. By the Bruhat decomposition, if such $w$ exists, we can suppose it lies in the Weyl group of $G$.

The group $X_M^G$ also acts on the space $\CA(U(\BA)M(F) \bs G(\BA))_\omega$ via
       $$\varphi\lto \varphi_\lambda := \varphi\cdot (\lambda\circ m_P).$$
       Then any $\lambda\in X_M^G$ induces an isomorphism
\[\lambda: \CA(M,\sigma) \stackrel{\sim}{\lra} \CA(M,\sigma_\lambda).\]
For any $\varphi \in \CA(M,\sigma)$, the Eisenstein series on $G(F) \bs G(\BA)$ associated to $\varphi$ is defined by
\[E(\varphi,\sigma)(g) = \sum_{\gamma \in P(F) \bs G(F)} \varphi(\gamma g),\]
whenever the sum converges.

\begin{prop}[\cite{MW} \S II.1.5, Proposition] 
   There exists an open cone $C_M^G$ in $X_M^G$
   such that for any $\varphi \in \sigma$, if $\lambda \in C_M^G$, 
   then the summation defining $E(\varphi_\lambda,\sigma_\lambda)(g)$ converges absolutely and uniformly when $g$ varies in a compact set. 
   Moreover, one also has
   \[E(\varphi,\sigma) \in \CA(G(F) \bs G(\BA))_\omega\]
   if it is convergent.
\end{prop}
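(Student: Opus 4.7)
The plan is to follow the classical Langlands–Moeglin-Waldspurger argument (\cite{MW} II.1.5), with simplifications afforded by the function-field setting. First I would reduce the pointwise convergence of $E(\varphi_\lambda,\sigma_\lambda)(g)$ uniformly on compacta of $g$ to an estimate on a Siegel set $\fS\subset G(\BA)$ satisfying $G(\BA)=G(F)\cdot\fS$. For $g\in\fS$, the Iwasawa decomposition $\gamma g=muk$ with respect to $P=MU$ and $K_0$ gives
\[\varphi_\lambda(\gamma g)=m^{\lambda+\rho_P}\,\varphi_k(m),\]
so absolute convergence of the Eisenstein series reduces to that of the scalar sum $\sum_{\gamma\in P(F)\bs G(F)} m(\gamma g)^{\Re(\lambda)+\rho_P}\,|\varphi_k(m(\gamma g))|$.

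The key ingredient is rapid decay of cuspidal automorphic forms. Since $\sigma$ is cuspidal and $\varphi\in\CA(M,\sigma)$ is $K_0$-finite, the family $\{\varphi_k\}_{k\in K_0}$ spans a finite-dimensional space of cusp forms on $M(F)\bs M(\BA)$, and each of them decays faster than any power along the directions of simple $M$-roots on Siegel sets of $M$. Combined with the factorization above, the summand is dominated by a term whose exponent is $\Re(\lambda)+\rho_P-N\rho$ with $\rho$ in the positive Weyl chamber and $N$ arbitrarily large. In the function-field setting this is proved by reducing to a standard discrete sum: $|\chi|(M(\BA))\subset q^\BZ$ is discrete, so the sum over $P(F)\bs G(F)$ is essentially a geometric/zeta-type series in finitely many variables $q^{\chi}$, and absolute convergence is controlled by positivity of certain linear forms in $\Re(\lambda)$. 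The interior of the resulting region defines the open cone $C_M^G$, and uniformity in $g$ on compacta is automatic because $k$ varies in a compact set.

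For the second assertion, I would verify the defining properties of $\CA(G(F)\bs G(\BA))_\omega$ in turn: left $G(F)$-invariance is built into the summation; the central character $\omega$ is inherited from $\varphi$; moderate growth on Siegel sets follows from the dominating estimate above; $K_0$-finiteness follows from $K_0$-finiteness of $\varphi$, since right $K_0$-translation permutes the $\varphi_k$; and Bernstein-center finiteness at each place $v$ follows from a canonical algebra homomorphism $\fZ(G(F_v))\to\fZ(M(F_v))$ coming from parabolic restriction, which converts the $\fZ(G(F_v))$-action on $E(\varphi,\sigma)$ into the $\fZ(M(F_v))$-action on $\varphi$; the latter is finite because $\varphi$ lies in the $\sigma$-isotypic component.

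The main obstacle is the rapid-decay estimate in the second step, which classically is the heart of the matter and relies on the Fourier expansion of cusp forms along unipotent radicals. In the function-field case, however, this estimate is considerably cleaner than in the number-field case: Siegel sets intersect $M^1(\BA)Z(\BA)\bs M(\BA)$ in a discrete subset (a finite-index subgroup of $(q^\BZ)^r$ by the description of $X_M$ given in \S\ref{sec:spec-decomp}), and cuspidality forces the support of $\varphi_k$ modulo the central direction to be compact, yielding the desired decay essentially for free.
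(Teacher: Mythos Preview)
The paper does not give its own proof of this proposition: it is stated with attribution to \cite{MW} \S II.1.5 and then used without further justification. So there is no ``paper's own proof'' to compare against; the only relevant question is whether your sketch faithfully reproduces the argument being cited.

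Your outline is essentially the Moeglin--Waldspurger argument, and the function-field simplifications you invoke are the right ones. In particular, the observation that cusp forms on $M(F)\bs M(\BA)$ with fixed central character are compactly supported modulo $Z_M(\BA)$ (a consequence of Harder's reduction theory) is exactly what replaces the analytic rapid-decay estimates needed over number fields, and it does make the convergence step much cleaner. The automorphy verification is also correct in outline; the Bernstein-center finiteness via the parabolic-restriction map $\fZ(G(F_v))\to\fZ(M(F_v))$ is the standard mechanism.

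One point to tighten: your displayed formula $\varphi_\lambda(\gamma g)=m^{\lambda+\rho_P}\varphi_k(m)$ and the subsequent domination are slightly schematic. The actual estimate in \cite{MW} controls the sum $\sum_{\gamma\in P(F)\bs G(F)} m_P(\gamma g)^{\Re(\lambda)+\rho_P}$ directly (this is Godement's convergence criterion), and the compact support of $\varphi_k$ enters by bounding $|\varphi_k|$ uniformly rather than by producing an extra $-N\rho$ in the exponent. As written, your ``$\Re(\lambda)+\rho_P-N\rho$'' heuristic conflates two separate mechanisms. This does not affect the correctness of the conclusion, but if you want the sketch to stand on its own you should separate the uniform bound on $|\varphi_k|$ from the convergence of the height sum.
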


Let $P(X_M^G)$ be the set of Paley-Wiener functions on $X_M^G$, i.e. the image of the Fourier transform (recall \eqref{pairing for X_M^G}) 
\[f \mapsto \wh{f}(\lambda) = \int_{M(\BA)^1 Z(\BA) \bs M(\BA)} f(m)\lambda(m)\,\mathrm{d}m\]
on the space $C_c^\infty(M(\BA)^1Z(\BA) \bs M(\BA))$.
		A section $\Phi: X_M^G \ra \CA(M,\sigma)$ is called a Paley-Wiener section if $\Phi$ is a sum of sections 
of the form
\[X_M^G\ni \lambda \lto \wh f(\lambda)\cdot \varphi\]
for some $\wh f \in P(X_M^G)$ and $\varphi \in \CA(M,\sigma)$.
Denote by $P(M,\sigma)$ the space of all the Paley-Wiener sections on $\CA(M,\sigma)$. 
For any $\Phi \in P(M,\sigma)$, consider the pseudo-Eisenstein series (see \cite[\S II.1.11 and \S II.1.12]{MW})
\[\theta_\Phi(g) = \int_{\lambda\in X_M^G, \Re(\lambda) = \lambda_0} E(\Phi(\lambda)_\lambda,\sigma_\lambda)\,\mathrm{d}\lambda\]
where $\lambda_0$ is an arbitrary element in $\Re(X_M^G)$  which is positive enough.

Let $L^2(G(F)\bs G(\BA))_\omega$ be the space of functions on $G(F)\bs G(\BA)$ with central character $\omega$ and square-integrable modulo the center $Z(\BA)$.
By computing the inner product of two psuedo-Eisenstein series, one obtains the following spectral decomposition result:
\begin{thm}[Spectral decomposition along cuspidal data, \cite{MW} \S II.2.4, Proposition] 
	Let $\fX$ be an equivalence class of cuspidal data. Denote by 
	$L^2(G(F) \bs G(\BA))_\fX$ the closed subspace of $L^2(G(F) \bs G(\BA))_\omega$ spanned by 
	$\theta_{\Phi}$ with $\Phi \in P(M,\sigma)$ where $(M,\sigma)$ is an arbitrary representative of $\fX$. 
	Then
	\[L^2(G(F) \bs G(\BA))_\omega = \bigoplus_{\fX} L^2(G(F) \bs G(\BA))_\fX.\]
\end{thm}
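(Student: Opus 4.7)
The plan is to follow the classical Langlands strategy developed in \cite{MW}: establish pairwise orthogonality of the subspaces $L^2(G(F)\bs G(\BA))_\fX$ for distinct equivalence classes $\fX$ of cuspidal data, then show that the sum of all such subspaces is dense in $L^2(G(F)\bs G(\BA))_\omega$. Both steps rest on a single core computation, the explicit evaluation of the inner product $\langle \theta_\Phi, \theta_{\Phi'}\rangle$ of two pseudo-Eisenstein series.

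For the orthogonality, I take $\Phi \in P(M,\sigma)$ and $\Phi' \in P(M',\sigma')$ and unfold one side. Moving the contour to $\Re(\lambda)=\lambda_0$ far enough inside $C_M^G$ so that $E(\Phi(\lambda)_\lambda,\sigma_\lambda)$ converges absolutely, and unfolding the sum defining $E$ against $\theta_{\Phi'}$, one obtains
\[\langle \theta_\Phi, \theta_{\Phi'}\rangle = \int_{\Re(\lambda)=\lambda_0} \int_{U(\BA)M(F)Z(\BA)\bs G(\BA)} \Phi(\lambda)_\lambda(g)\,\overline{\theta_{\Phi'}(g)}\,\mathrm{d}g\,\mathrm{d}\lambda.\]
Integrating out $U(\BA)$ produces the constant term $(\theta_{\Phi'})_P$ along $P=MU$ and reduces the inner integral to a pairing over $M(F)Z(\BA)\bs M(\BA)$ between $\Phi$ and $(\theta_{\Phi'})_P$. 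The constant term of a pseudo-Eisenstein series is itself a sum over Weyl elements $w$ with $wM'w^{-1}=M$ of intertwining integrals applied to $\Phi'$, evaluated on a translated contour in $X_M^G$. When $(M,\fP)\not\sim(M',\fP')$, no $w$ sends $(M',\sigma')$ into the class $\fP$, and cuspidality of $\sigma$ kills the pairing with the $\sigma$-isotypic component.

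For density, I would show that the orthogonal complement of $\bigoplus_\fX L^2(G(F)\bs G(\BA))_\fX$ is trivial. If $\psi$ is orthogonal to every $\theta_\Phi$, the same unfolding applied to $\langle \theta_\Phi, \psi\rangle = 0$ forces the projection of the constant term $\psi_P$ onto every cuspidal isotypic component of $\CA_0(M(F)\bs M(\BA))_{\omega_M}$ to vanish, for every standard parabolic $P$ and every $\omega_M \in \Omega_M(\omega)$. Induction on the semisimple rank of $G$, starting at $M=G$ and climbing through the lattice of parabolics while invoking the orthogonality just established, then yields $\psi=0$.

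The main obstacle is the explicit constant term computation for $\theta_{\Phi'}$ and the interpretation of the reduced pairing as a Fourier transform on $X_M^G$; this is what upgrades orthogonality to the finer statement that different Weyl-orbits of cuspidal data give orthogonal subspaces. Compared with the number field case, the function field setting is considerably cleaner: $X_M^G$ is a compact complex torus (a product of circles up to a finite quotient), Paley-Wiener sections reduce to finite Fourier series on this torus, contour shifts only pick up finitely many residues from intertwining operators, and no Archimedean Schwartz analysis is required.
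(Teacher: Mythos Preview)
The paper does not give its own proof of this theorem: it is quoted as a known result from \cite{MW} \S II.2.4, preceded only by the one-line remark ``By computing the inner product of two pseudo-Eisenstein series, one obtains the following spectral decomposition result.'' Your sketch is exactly the Langlands/M\oe glin--Waldspurger approach that this remark alludes to---unfolding $\langle\theta_\Phi,\theta_{\Phi'}\rangle$, computing constant terms of pseudo-Eisenstein series, and using cuspidality to obtain both orthogonality and density---so you are in full agreement with the cited source and with the paper's (implicit) indication of method.
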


We also need some finiteness property for the spectral decomposition in our proof later. 
The following theorem is due to Harder:

\begin{thm}[\cite{Harder}, Corollary 1.2.3]\label{Harder's thm} 
Let $G$ be a reductive group over $F$ and $\omega$ be a 
	unitary character of $Z(F)\bs Z(\BA)$. Then for any open compact subgroup $K$
	of $G(\BA)$, the vector space $L_0^2(G(F) \bs G(\BA)/K)_\omega$ is of finite dimension.
\end{thm}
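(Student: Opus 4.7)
The plan is to prove Harder's theorem in two stages. First, I would show that every $K$-invariant cusp form with central character $\omega$ has compact support modulo $Z(\BA)G(F)$; second, that the space of right-$K$-invariant continuous complex-valued functions on any fixed compact subset of $G(\BA)$ is finite-dimensional. Combining the two, restriction embeds $L^2_0(G(F)\bs G(\BA)/K)_\omega$ into a finite-dimensional function space.

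For the first step I would invoke reduction theory over the function field $F$. Fix a Siegel set $\fS=\Omega\cdot A_0(t)\cdot K_0\subset G(\BA)^1$, where $\Omega$ is a compact subset of $M_0(\BA)^1$ and $A_0(t)$ is the ``dominant part'' of $T_0(\BA)$ cut out by $|\alpha(a)|\ge t$ for all simple roots $\alpha$, together with a finite set of representatives $\Xi$ such that $G(\BA)=Z(\BA)G(F)\cdot\bigcup_{\xi\in\Xi}\xi\fS$. The vanishing of all constant terms $\varphi_P$ along proper standard parabolics $P$, combined with the $K$-invariance of $\varphi$ and the Iwasawa decomposition, forces a cusp form $\varphi$ to vanish as soon as $\max_\alpha|\alpha(a)|$ is sufficiently large. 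The key point is that in the function field setting the values $|\alpha(a)|$ live in the discrete group $q^\BZ$, so this ``sufficiently large'' condition cuts out a compact piece of $A_0(t)$ modulo $Z(\BA)$. Consequently there is a compact subset $C_{\omega,K}\subset G(\BA)$ whose image in $Z(\BA)G(F)\bs G(\BA)$ contains the support of every cuspidal $\varphi\in L^2_0(G(F)\bs G(\BA)/K)_\omega$.

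The second step is a soft point-set argument. If $C\subset G(\BA)$ is compact and $K$ is a compact open subgroup, then $C\cdot K$ is compact while $(C\cdot K)/K$ is both compact and discrete, hence finite; thus any right-$K$-invariant function on $C\cdot K$ is determined by finitely many values, and the space of such functions is finite-dimensional. Restriction from $Z(\BA)G(F)\bs G(\BA)/K$ to the image of $C_{\omega,K}$ therefore gives an injection of $L^2_0(G(F)\bs G(\BA)/K)_\omega$ into a finite-dimensional space, completing the proof.

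The main obstacle will be Step 1. Over number fields the analogous argument only produces rapid decay rather than compact support; here the essential new input is the discreteness of the local value groups $|F_v^\times|=q_v^\BZ$, which promotes decay into actual vanishing on a cofinite piece of the Siegel cone. Making this precise for a general reductive $G$ requires simultaneous control of the constant term along every proper standard parabolic together with careful bookkeeping of the image of $Z(\BA)$ inside $T_0(\BA)^1\bs T_0(\BA)$, and this is where the bulk of the argument in \cite{Harder} lies.
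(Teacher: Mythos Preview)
The paper does not prove this statement at all; it merely quotes it as a known result of Harder (Corollary~1.2.3 of \cite{Harder}) and then uses it as a black box to derive Corollary~\ref{cor of Harder}. So there is no ``paper's own proof'' to compare against.

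That said, your outline is the correct one and is essentially Harder's original argument. The two-step structure --- first showing that $K$-invariant cusp forms are compactly supported modulo $Z(\BA)G(F)$, then observing that right-$K$-invariant functions on a compact set form a finite-dimensional space --- is exactly how the result is established. Your identification of the key phenomenon is also right: over a function field the absolute values $|\alpha(a)|$ take values in the discrete set $q^{\BZ}$, so the usual ``constant-term approximation'' argument (which over number fields only yields rapid decay) here gives exact vanishing once $|\alpha(a)|$ exceeds a threshold depending on $K$. The mechanism behind Step~1 that you allude to can be made precise as follows: for $g=\omega a k$ deep enough in the Siegel cone relative to a maximal standard parabolic $P=MU$, conjugation by $a^{-1}$ contracts a fixed fundamental domain for $U(F)\bs U(\BA)$ into $kKk^{-1}\cap U(\BA)$, whence right-$K$-invariance forces $\varphi(g)=\varphi_P(g)=0$. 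Running this over all maximal standard parabolics and using that the thresholds are attained in $q^{\BZ}$ gives the uniform compact support. Your Step~2 is unproblematic.

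One small point worth tightening: you should make explicit that the compact set $C_{\omega,K}$ can be chosen independently of the particular cusp form $\varphi$ (it depends only on $K$ and the reduction-theory constants), since this uniformity is what makes the restriction map a genuine linear embedding of the whole space $L^2_0(G(F)\bs G(\BA)/K)_\omega$ into a single finite-dimensional target.
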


\begin{cor}\label{cor of Harder}
	Let $G$ be a reductive group over $F$ and $\omega$ be a 
	unitary character of $Z(F)\bs Z(\BA)$. Let $K$ be an open compact subgroup of $G(\BA)$.
	Then there are only finitely many cuspidal data occurring in the spectral decomposition of
	$L^2(G(F) \bs G(\BA) /K)_\omega$.
\end{cor}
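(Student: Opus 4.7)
The plan is to reduce finiteness of the cuspidal data to Harder's theorem (Theorem \ref{Harder's thm}) applied to the Levi subgroups of $G$. Since standard parabolic subgroups of $G$ correspond to subsets of the simple roots of $T_0$ in $G$, there are only finitely many standard Levi subgroups $M$; likewise the equivalence of cuspidal data involves only the finite Weyl group. It therefore suffices to fix a standard Levi $M$ and show that only finitely many $X_M^G$-orbits of cuspidal representations $\sigma \in \Pi_0(M(\BA))_\omega$ contribute to the spectral decomposition of $L^2(G(F) \bs G(\BA)/K)_\omega$.

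Fix such an $M$ and suppose that the cuspidal datum $(M,\fP)$ contributes. By the spectral decomposition theorem, there exist $\sigma \in \fP$ and a Paley-Wiener section $\Phi \in P(M,\sigma)$ with $\theta_\Phi$ nonzero in the $K$-invariant subspace. Tracing through the definitions of the Eisenstein and Paley-Wiener constructions, together with the Iwasawa decomposition $G(\BA) = P(\BA)K_0$ and the formula $\varphi_k(m) = m^{-\rho_P}\varphi(mk)$, right $K$-invariance of $\theta_\Phi$ forces $\Phi(\lambda)$ to take values, for each $\lambda$, in the subspace of $\CA(M,\sigma_\lambda)$ spanned by forms whose $\varphi_k$ are invariant under a fixed open compact subgroup
\[K_M := \bigcap_{k \in K_0} \bigl(M(\BA) \cap k K k^{-1}\bigr) \subset M(\BA).\]
Replacing $\sigma$ by a suitable twist $\sigma_\lambda$ within its $X_M^G$-orbit (which leaves $\fP$ unchanged), we may assume that $\sigma^{K_M} \neq 0$.

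Next, the central character $\omega_M$ of $\sigma$ lies in $\Omega_M(\omega)$ and is trivial on $Z_M(\BA) \cap K_M$. The group $X_M^G$ acts on $\Omega_M(\omega)$ via $\omega_M \mapsto \omega_M \cdot \lambda|_{Z_M(\BA)}$, and one checks that the quotient of the subset of such $\omega_M$'s by this action is finite: the relevant quotient group
\[Z_M(\BA)\big/Z_M(F)\cdot Z(\BA)\cdot (Z_M(\BA) \cap K_M)\cdot (Z_M(\BA) \cap M(\BA)^1)\]
is finite, essentially a quotient of the degree-zero Picard group of the curve underlying $F$ by the ramification and central constraints. Thus, after a further $X_M^G$-twist, we may assume $\omega_M$ equals one of finitely many fixed characters $\omega_M^{(1)}, \ldots, \omega_M^{(r)}$.

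Finally, for each $\omega_M^{(j)}$, Harder's theorem applied to the reductive group $M$ with the open compact subgroup $K_M \subset M(\BA)$ and central character $\omega_M^{(j)}$ shows that $L_0^2(M(F)\bs M(\BA)/K_M)_{\omega_M^{(j)}}$ is finite-dimensional, so it contains only finitely many irreducible cuspidal constituents $\sigma$. Collecting contributions from the finitely many $M$, the finitely many $\omega_M^{(j)}$, and the finitely many $\sigma$ yields the desired finiteness. The main technical hurdle will be the bookkeeping in the second and third paragraphs: identifying the correct level $K_M \subset M(\BA)$ induced by $K$, and verifying that the set of admissible central characters modulo the $X_M^G$-action is finite under the $K_M$-ramification constraint. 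Once these points are settled, Harder's theorem closes the argument immediately.
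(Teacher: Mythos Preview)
Your strategy is exactly the paper's: fix a standard Levi $M$, produce from $K$ an open compact $K_M\subset M(\BA)$ so that every contributing $\sigma$ has $\sigma^{K_M}\neq 0$, reduce the possible central characters $\omega_M$ to finitely many after $X_M^G$-twist, and then invoke Harder's theorem for $M$. Your explicit formula $K_M=\bigcap_{k\in K_0}\bigl(M(\BA)\cap kKk^{-1}\bigr)$ is a perfectly good substitute for the lemma from \cite{Renard} that the paper cites (the intersection is finite because $[K_0:K]<\infty$, and a nonzero $K$-fixed vector in $I_P^G\sigma$ restricts along some $k\in K_0$ to a nonzero $K_M$-fixed vector in $\sigma$).

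The one place where your argument slips is the displayed quotient
\[
Z_M(\BA)\big/\,Z_M(F)\cdot Z(\BA)\cdot(Z_M(\BA)\cap K_M)\cdot(Z_M(\BA)\cap M(\BA)^1),
\]
which is \emph{not} finite in general. Take $G=\GL_2$, $M$ the diagonal torus, $K_M$ maximal: since $Z_M(F)$ and $Z_M(\BA)\cap K_M$ already lie in $Z_M(\BA)^1=(\BA^1)^2$, the denominator collapses to $\Delta(\BA^\times)\cdot(\BA^1)^2$, and the quotient is $\BZ^2/\Delta(\BZ)\cong\BZ$. What you actually need is finiteness of the \emph{dual} piece: the $X_M^G$-orbit set of admissible $\omega_M$'s is a torsor under the characters of
\[
\bigl(Z_M(\BA)^1\cdot Z(\BA)\bigr)\big/\bigl(Z_M(F)\cdot(Z_M(\BA)\cap K_M)\cdot Z(\BA)\bigr),
\]
which in the $\GL_2$ example is $(\Pic^0)^2/\Delta(\Pic^0)\cong\Pic^0$, confirming your ``degree-zero Picard'' intuition. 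The paper does not try to identify this group directly; instead it passes to the torus $T=Z_M/Z_G$, writes $T=T_s\cdot T_0$ with $T_s$ split and $T_0$ anisotropic, uses compactness of $T_0(F)\bs T_0(\BA)$ together with $T_s(F)\bs T_s(\BA)\cong(F^\times\bs\BA^1)^d\times\BZ^d$, and then observes that an $X_M^G$-twist kills the $\BZ^d$-factor while the remaining compact-mod-$K_T$ part has finitely many characters. You should redo the bookkeeping along one of these two lines; once that is fixed, your proof goes through.
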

\begin{proof} 
   Let $\fX$ be an equivalence class of cuspidal data. 
   Assume that $L^2(G(F) \bs G(\BA)/K)_\fX \not= 0$.
   Then for any $(M,\sigma) \in \fX$, we have $(I_P^G \sigma)^K \not=0$. 
  Using \cite[\S III.2.2, Lemme]{Renard}, one sees that there exists an open compact subgroup $K_M$ of $M(\BA)$ depending on $K$ such that $\sigma^{K_M} \not =0$ for any $\sigma$ with $(I_P^G \sigma)^K \not=0$. 
   We claim that by modifying $\sigma$ to $\sigma_\lambda$ with $\lambda \in X_M^G$,
   the central character of $\sigma$ belongs to a finite set. 
   Hence, by Harder's theorem (Theorem \ref{Harder's thm}), there are only finitely many such $\sigma$'s.

   To prove the claim, consider the
   set $\Sigma$ of characters $\omega_M: Z_M(F) \bs Z_M(\BA)/K_{Z_M} \ra \BC^\times$ 
   with $\omega_M|_{Z(\BA)} = \omega$, here $K_{Z_M}$ is a fixed open compact subgroup
   of $Z_M(\BA)$. 
   The group of characters of the quotient $Z_G(\BA)Z_M(\BA)^1 \bs Z_M(\BA)$ acts on $\Sigma$. 
   The claim is then equivalent to saying that the number of the orbits of $\Sigma$ under this action is finite. 
   To see the finiteness, let $T$ be the torus $Z_M/Z_G$ over $F$ and $K_T$ the image of 
   $K_{Z_M}$ in $T(\BA)$.
   Consider the exact sequence
   \[1 \lra Z_G(F) \bs Z_G(\BA) / (K_{Z_M} \cap Z_G(\BA)) \lra Z_M(F) \bs Z_M(\BA) /K_{Z_M}
   \lra T(F) \bs T(\BA) /K_T \lra 1.\]
   We may write $T = T_s\cdot T_0$ with $T_s \cap T_0$ being finite, where 
   $T_s$ is a split torus and $T_0$ an anisotropic torus. 
   For the anisotropic part, the quotient $T_0(F) \bs T_0(\BA)$ is compact. 
   And for the split part, if we denote its rank by $d$, then 
   $T_s(F) \bs T_s(\BA) \cong (F^\times \bs \BA^1)^d \times \BZ^d$
   with $(F^\times \bs \BA^1)^d$ compact. 
   Note that we may modify $\omega_M$ by a character on 
   $Z_G(\BA)Z_M(\BA)^1 \bs Z_M(\BA)$ such that $\omega_M$ is trivial on $\BZ^d$. 
   Therefore, there must be finite number of such orbits by the exact sequence above.

\end{proof}

\section{Proof of Theorem \ref{iso-cusp}}\label{sec: proof}

We prove Theorem \ref{iso-cusp} in general case in this section.
We recall some basics on unramified representations and the Satake isomorphism at first, and the basic reference is \cite{Car}.

We keep the notation used in Section \ref{sec: intro} and Section \ref{sec:spec-decomp}.
Recall that $T_0$ is the maximal split torus of $M_0$. 
We denote by $d$ the rank of $T_0$, and fix a basis of $\mathrm{Rat}(T_0)$, say $\chi_1,\ldots,\chi_d$.
Let $v$ be a place of $F$ outside $S$. 
Denote by $\wh{M_0(F_v)}_\un$ the group of unramified characters on $M_0(F_v)$.  
Then we have an isomorphism
\[\left( \BC \big/(\frac{2\pi i}{\log q_v})\BZ \right)^d \stackrel{\sim}{\lra} \wh{M_0(F_v)}_\un,
\quad (\lambda_1,\ldots,\lambda_d) \mapsto |\chi_1|^{\lambda_1} \cdots |\chi_d|^{\lambda_d},\]
so that we may view $\wh{M_0(F_v)}_\un$ as a torus over $\BC$. 
Denote by $\BC\left[\wh{M_0(F_v)}_\un\right]$ the ring of regular functions on $\wh{M_0(F_v)}_\un$. Under the
above isomorphism, one has
\begin{equation}\label{isom 1 in sec 4}
    \BC\left[\wh{M_0(F_v)}_\un\right] \cong \BC\left[q_v^{\lambda_1},q_v^{-\lambda_1},\ldots,q_v^{\lambda_d},q_v^{-\lambda_d}\right]
\end{equation}
Let $v_1$ be another place of $F$ such that $q_{v_1} = q_v^{k}$ for some integer $k$. 
Then by taking $q_{v_1}^{\pm \lambda_i} \mapsto \left(q_v^{\pm \lambda_i}\right)^k$ ($i=1,\ldots,d$), we have an injection
\[\BC\left[\wh{M_0(F_{v_1})}_\un\right] \hookrightarrow \BC\left[\wh{M_0(F_v)}_\un\right]\]
from \eqref{isom 1 in sec 4}.

There is a perfect pairing
\[\wh{M_0(F_v)}_\un \times M_0(F_v)/M_0(\CO_v) \lra \BC^\times.\]
For each $f \in C_c^\infty(M_0(F_v)/M_0(\CO_v))$, one considers its Fourier transform
\[\wh{f}(\chi) = \int_{M_0(F_v)/M_0(\CO_v)} f(m)\chi(m)\, \mathrm{d}m, \quad \left(\chi\in \wh{M_0(F_{v})}_\un\right)\]
which gives an isomorphism
\[\wh{}: C_c^\infty(M_0(F_v)/M_0(\CO_v)) \stackrel{\sim}{\lra} \BC\left[\wh{M_0(F_v)}_\un\right].\]

Denote by $\wh{G(F_v)}_\un$ the set of irreducible unramified representations of $G(F_v)$, i.e. 
the irreducible smooth representations $\pi_v$ of $G(F_v)$ with the non-zero invariant subspace $\pi_v^{K_{0,v}}$. 
For any $\chi_v \in \wh{M_0(F_v)}_\un$, there is a unique subquotient of $I_{P_0}^G(\chi_v)$ which is an irreducible unramified representation of $G(F_v)$. 
This in fact gives an isomorphism
\[\wh{M_0(F_v)}_\un/W \stackrel{\sim}{\lra} \wh{G(F_v)}_\un,\]
where $W = N_{G(F)}(M_0(F))/M_0(F)$ is the Weyl group of $G$. Conversely, for an irreducible unramified representation
$\pi_v \in \wh{G(F_v)}_\un$, we denote by $\chi_{\pi_v} \in \wh{M_0(F_v)}_\un/W$ the $W$-orbit of the unramified character corresponding to $\pi_v$ as above.

For each $\pi_v \in \wh{G(F_v)}_\un$, 
the spherical Hecke algebra $\CH_v = C_c^\infty(K_{0,v} \bs G(F_v)/K_{0,v})$ acts on the spherical line $\pi_v^{K_{0,v}}$ of $\pi_v$, which gives a map
\[\tr: \CH_v \lra C\left(\wh{G(F_v)}_\un\right), \quad f \mapsto \left(\pi_v \mapsto \tr (\pi_v(f))\right).\]
Recall that $C\left(\wh{G(F_v)}_\mathrm{un}\right)$ is the space of continuous functions on $\wh{G(F_v)}_\mathrm{un}$.
Consider the Satake isomorphism
\[\CS: \CH_v \stackrel{\sim}{\lra} C_c^\infty(M_0(F_v)/M_0(\CO_v))^W\]
given by
\[(\CS f)(m) = \delta_{P_0}(m)^{1/2}\int_{U_0(F_v)} f(mn)\, \mathrm{d}n. \quad (f \in \CH_v)\] 
Then the composition map
\[\CH_v \stackrel{\tr}{\lra} C\left(\wh{G(F_v)}_\un\right) \stackrel{\sim}{\lra} C\left( \wh{M_0(F_v)}_\un /W \right)\]
factors through the isomorphism
\[\CH_v \stackrel{\CS}{\lra} C_c^\infty\left(M_0(F_v)/M_0(\CO_v)\right)^W \stackrel{\wh{}}{\lra} \BC\left[\wh{M_0(F_v)}_\un\right]^W.\]
In particular, we will view elements in $\CH_v$ as functions on $\wh{M_0(F_v)}_\un$ in the following.

Let $\pi$ be an irreducible admissible representation of $G(\BA)$ with central character $\omega$. 
Let $K = K_S \times K_0^{(S)}$ be an open compact subgroup of $G(\BA)$ such that $\pi^K \not= 0$. In particular, $\pi$ is unramified outside $S$. 
Assume that $\pi$ is not $(G,S)$-CAP. 
Let $\fX = [(M,\sigma)]$ be an equivalence class of cuspidal datum with $M \not= G$ such that $L^2(G(F) \bs G(\BA)/K)_\fX \not= 0$. In particular, $\sigma$ is also unramified outside $S$. 
In the following, we want to construct a Hecke algebra element $\mu_\sigma \in \CH^{(S)}$ such that 
\begin{enumerate}
	\item $R(\mu_\sigma)$ acts on $L^2(G(F) \bs G(\BA) /K)_\fX$ by zero;
	\item $\pi(\mu_\sigma) = 1$. 
\end{enumerate}

\paragraph{\bf Step 1: Killing the continuous spectrum}
Note that the restriction map $\fa_M^* \hookrightarrow \fa_{M_0}^*$
is injective, and we fix a splitting of this injection
\[\ell: \fa_{M_0}^* \lra \fa_M^*.\]

Fix a pair of places
$$\mathbf{v}_\infty^{}=\{v^{}_{\infty, 1}, v^{}_{\infty,2}\},$$ 
disjoint with $S$, such that 
		\begin{equation}\label{isom of poly rings}
		\BC\left[q^{\pm \lambda_1}_{\mathbf{v}_{\infty}^{}},\ldots, q^{\pm \lambda_d}_{\mathbf{v}_{\infty}^{}}\right]=\BC\left[q^{\pm\lambda_1},\ldots,q^{\pm\lambda_d}\right]	
		\end{equation}

Fix a cuspidal automorphic representation $\sigma$ on $M(\BA)$.
Similar to the $\PGL(2)$ case, we denote:
\begin{itemize}
\item[$\Diamond$] $X_i=\BC \big/\frac{2\pi \sqrt{-1}}{\log q_{\infty ,i}}\BZ$ ($i=1,2$), and $X=\BC \big/\frac{2\pi \sqrt{-1}}{\log q}\BZ$. 
The Weyl group $W$ acts on $X_i^d$ $(i =1,2)$ by permutations, so that $W \times W$ acts on $(X_1 \times X_2)^d$.
\item[$\Diamond$] $\alpha_{\mathbf{v}_\infty}=\left((\lambda^{}_{1,1}, \lambda^{}_{2,1}), \ldots,  (\lambda^{}_{1,d}, \lambda^{}_{2,d})\right)\in \left(X_1\times X_2 \right)^d$ 
such that the Satake parameter for $\pi_{v_{\infty, i}}$ is given by $(q_{v_{\infty,i}}^{\lambda^{}_{i,1}}, \ldots, q_{v_{\infty,i}}^{\lambda^{}_{i,d}})$ $(i=1,2)$. 
In other words, for $v_{\infty, i}\in \mathbf{v}_\infty$, one has
$$\tr(\pi_{v_{\infty ,i}}(f))=(\CS f)(\lambda_{i,1}, \ldots, \lambda_{i,d}), \quad f \in \CH_{v_{\infty,i}}.$$ 
We also fix $\beta_{\mathbf{v}_\infty}$ similarly by replacing $\pi$ to $I_{P_M}^G(\sigma)$.	
\end{itemize}
Denote $\lambda_\infty=\lambda_\infty^{(w,w')} = \alpha^{(w,w')}_{\mathbf{v}^{}_\infty}-\beta_{\mathbf{v}^{}_\infty}\in (X_1 \times X_2)^d$.
By the condition that $\pi$ is not a $(G,S)$-CAP representation, one can find the following set places: 
$$\mathbf{v}_f^{}=\{v^{}_{w,w'}\}_{(w,w')\in W\times W},$$
disjoint with $S\cup \mathbf{v}_\infty,$
such that for each $(w,w')\in W\times W$:
	\begin{enumerate}
		\item[(i)] if $\lambda_\infty \in \Delta^d$ 
		as a vector in $\left(X_1\times X_2 \right)^d$, $\pi_{v_{w,w'}^{}}$ is not a subquotient of the following local component of parabolic induced representation: $$I_{P_M}^G\left(\sigma_{\ell(\lambda^\sharp_\infty)}\right)_{v_{w,w'}^{}}.$$ 
		Here, as before, we denote by $\Delta$ the image of the diagonal map $X \lra X_1 \times X_2$,
		and denote $\lambda^\sharp_\infty$ to be any lifting of $\lambda_\infty=\alpha^{(w,w')}_{\mathbf{v}^{}_\infty}-\beta_{\mathbf{v}^{}_\infty}\in \Delta^d$ 
		to $\fa_{M_0}^* \cong \BC^d$.
		\item[(ii)]  if $\lambda_\infty \notin 
		\Delta^d$, $\pi_{v_{w,w'}^{}}$ is not a subquotient of $$I_{P_M}^G(\sigma)_{v_{w,w'}}.$$
	\end{enumerate}

Let $(w,w')\in W\times W$. 
Suppose first that $\lambda_\infty=\alpha^{(w,w')}_{\mathbf{v}^{}_\infty}-\beta_{\mathbf{v}^{}_\infty}\in 
\Delta^d$.  
In this case, we take   
$$T_{w,w'}\in \CH_{v_{w,w'}}\simeq \BC\left[q_{v_{w,w'}}^{\pm\lambda_1},\ldots,q_{v_{w,w'}}^{\pm\lambda_d}\right]^W,$$
indexed by $(w,w')\in W\times W$, such that 
\begin{equation}\label{T_{w,w'}-I}
	T_{w,w'}(\alpha_{v_{w,w'}})\neq T_{w,w'}(\beta_{v_{w,w'}}+\lambda^\sharp_\infty).
\end{equation}
Here $\alpha_{v_{w,w'}}$ is a fixed element in $X_{v_{w,w'}}^d$ 
such that $$\tr(\pi_{v_{\infty ,1}}(f))=(\CS f)(\alpha_{v_{w,w'}})$$
for $f\in \CH_{v_{w,w'}}$. We also fix $\beta_{v_{w,w'}} \in 
X_{v_{w,w'}}^d$ by replacing $\pi$ to $I_{P_M}^G \sigma$.
On the other hand, suppose that 
$\lambda_\infty=\alpha^{(w,w')}_{\mathbf{v}^{}_\infty}-\beta_{\mathbf{v}^{}_\infty}\notin \Delta^d$.  
In this case, we take $T_{w,w'}\in \CH_{v_{w,w'}}$ such that 
\begin{equation}\label{T_{w,w'}-II}
	T_{w,w'}(\alpha_{v_{w,w'}})\neq T_{w,w'}(\beta_{v_{w,w'}}).
\end{equation}

\begin{claim}\label{claim T_{w,w'}}
	Denote the multi-variables $\underline\lambda_i=(\lambda_{1,i},\lambda_{2,i})$ $(i=1,\ldots, d)$. For each $(w,w')\in W\times W$, there exists $T_{w,w',\infty}\in \BC\left[q^{\pm \underline\lambda_1}_{\mathbf{v}_{\infty}^{}},\ldots, q^{\pm \underline\lambda_d}_{\mathbf{v}_{\infty}^{}}\right]$ such that 
	\begin{itemize}
		\item[(1)] $T_{w,w',\infty}(\beta_{\mathbf{v}_\infty}+\lambda)=T_{w,w'}(\beta_{v_{w,w'}}+\lambda)$ for all $\lambda\in X^d$;
		\item[(2)] $T_{w,w',\infty}(\alpha^{(w,w')}_{\mathbf{v}^{}_\infty})\neq T_{w,w'}(\alpha_{v_{w,w'}})$.
	\end{itemize}
\end{claim}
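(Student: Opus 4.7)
The plan is to mirror the two-case strategy already used in Claim \ref{claim T_i}, adapted to the multivariable setting. I would split the argument according to whether $\lambda_\infty = \alpha^{(w,w')}_{\mathbf{v}_\infty} - \beta_{\mathbf{v}_\infty}$ lies in the diagonal $\Delta^d \subset (X_1 \times X_2)^d$ or not. In both cases the key ingredient is the identity \eqref{isom of poly rings}: restricted to the diagonal locus, the coordinate ring of $(X_1\times X_2)^d$ surjects onto $\BC[q^{\pm\lambda_1},\ldots,q^{\pm\lambda_d}]$, which is exactly the ring into which the Satake transform at $v_{w,w'}$ lands (after the substitution $q^{\lambda_i} \mapsto q_{v_{w,w'}}^{\lambda_i}$).

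When $\lambda_\infty \in \Delta^d$, I would first use \eqref{isom of poly rings} to lift the Laurent polynomial $\lambda \mapsto T_{w,w'}(\beta_{v_{w,w'}}+\lambda)$ to some $T_{w,w',\infty}$ in $\BC[q^{\pm\underline\lambda_1}_{\mathbf{v}_\infty},\ldots,q^{\pm\underline\lambda_d}_{\mathbf{v}_\infty}]$ whose restriction to $\beta_{\mathbf{v}_\infty}+\Delta^d$ matches. This gives condition (1) directly. Picking any lift $\lambda^\sharp_\infty\in X^d$ of $\lambda_\infty$, one has
\[T_{w,w',\infty}(\alpha^{(w,w')}_{\mathbf{v}_\infty})=T_{w,w',\infty}(\beta_{\mathbf{v}_\infty}+\lambda^\sharp_\infty)=T_{w,w'}(\beta_{v_{w,w'}}+\lambda^\sharp_\infty),\]
and this differs from $T_{w,w'}(\alpha_{v_{w,w'}})$ by the defining inequality \eqref{T_{w,w'}-I}, giving condition (2).

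When $\lambda_\infty \notin \Delta^d$, the plan is to first pick any $T^{(1)}_{w,w',\infty}$ satisfying (1), obtained by the same lifting via \eqref{isom of poly rings}, and then correct it by a term $cR$, where $R$ is a Laurent polynomial in $\BC[q^{\pm\underline\lambda_1}_{\mathbf{v}_\infty},\ldots,q^{\pm\underline\lambda_d}_{\mathbf{v}_\infty}]$ that vanishes on the translated diagonal $\beta_{\mathbf{v}_\infty}+\Delta^d$ but is nonzero at $\alpha^{(w,w')}_{\mathbf{v}_\infty}$. Such an $R$ exists because the ring in question is the coordinate ring of $(X_1\times X_2)^d$, $\beta_{\mathbf{v}_\infty}+\Delta^d$ is a proper closed subvariety, and by hypothesis $\alpha^{(w,w')}_{\mathbf{v}_\infty}$ lies outside it. Adding $cR$ preserves (1); tuning $c$ so that $T_{w,w',\infty}(\alpha^{(w,w')}_{\mathbf{v}_\infty})=T_{w,w'}(\beta_{v_{w,w'}})$ then yields (2) by \eqref{T_{w,w'}-II}.

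I do not anticipate any serious obstacle, since the argument is a direct multivariable version of the one already carried out for Claim \ref{claim T_i}, with $\Delta$ replaced by $\Delta^d$ and a single Satake variable replaced by $d$. The only care required is bookkeeping: tracking how \eqref{isom of poly rings} identifies the single-variable description of $\CH_{v_{w,w'}}$ with functions on the diagonal locus in $(X_1\times X_2)^d$, and checking that the evaluation points $\alpha$, $\beta$ transfer consistently under this identification so that the lifting and the vanishing arguments are legitimate.
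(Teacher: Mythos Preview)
Your proposal is correct and follows essentially the same approach as the paper: the paper's proof also splits on whether $\lambda_\infty\in\Delta^d$, uses \eqref{isom of poly rings} to produce a lift satisfying (1), then invokes \eqref{T_{w,w'}-I} in the diagonal case and, in the non-diagonal case, appeals to the argument of Claim~\ref{claim T_i} (which is exactly your correction-by-$cR$ trick). Your write-up is in fact more explicit than the paper's, which simply refers back to Claim~\ref{claim T_i} for the second case.
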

\begin{proof}
	Suppose that $\lambda_\infty=\alpha^{(w,w')}_{\mathbf{v}^{}_\infty}-\beta_{\mathbf{v}^{}_\infty}\in \Delta^d$. 
	By \eqref{isom of poly rings}, we can find $T_{w,w',\infty}$ for any fixed $T_{w,w'}$ satisfying (1) above.
		Moreover, for any $T_{w,w',\infty}$ satisfying (1), it must satisfy (2) by condition \eqref{T_{w,w'}-I} and (i) above.
		
	Suppose that $\lambda_\infty=\alpha^{(w,w')}_{\mathbf{v}^{}_\infty}-\beta_{\mathbf{v}^{}_\infty}\notin \Delta^d$. 	
	Then the existence of $T_{w,w',\infty}$ satisfying (1) and (2) follows from a similar argument as in the proof of Claim \ref{claim T_i}, using \eqref{T_{w,w'}-II} above.
\end{proof}

Granting the above, for any $(w,w')$ and $(\omega,\omega')$ in $W\times W$, we construct the following matrix
\begin{equation}\label{matrix in general}
	\left(T_{(w,w')}^{(\omega,\omega')}:= T_{w,w'}-T_{w,w',\infty}^{(\omega,\omega')}\right)_{(|W|\times |W|)\times (|W|\times |W|)},
\end{equation}
where $T_{w,w',\infty}^{(\omega,\omega')}=T_{w,w',\infty}\circ (\omega,\omega')$.
Note that by Claim \ref{claim T_{w,w'}}, Part (2), in the diagonal of the matrix \eqref{matrix in general}, one has
$$T_{(w,w')}^{(w,w')}(\alpha_{v_{w,w'}}, \alpha_{\mathbf{v}_\infty}):= T_{w,w'}(\alpha_{v_{w,w'}})-T_{w,w',\infty}^{(w,w')}(\alpha_{\mathbf{v}_\infty})\neq 0.$$
For $i=1,\ldots, d$, we denote the multi-variables $\underline\lambda_{i,f}=(\lambda_{i,w,w'})_{(w,w')\in W\times W}$, and also $\underline\lambda_{i,\infty}=(\lambda_{i,1,\infty}, \lambda_{i,2,\infty})$.
It follows that there are constants $C_{(w,w')}^{(\omega,\omega')}\in \BC$ such that 
$$T^{(\omega,\omega')}:=\sum_{(w,w')\in W\times W} C_{(w,w')}^{(\omega,\omega')}\cdot T_{(w,w')}^{(\omega,\omega')}\in \BC\left[q_{\mathbf{v}_f}^{\pm \underline\lambda_{1,f}}, \ldots, q_{\mathbf{v}_f}^{\pm \underline\lambda_{d,f}}, q_{\mathbf{v_\infty}}^{\pm \underline\lambda_{1,\infty}},\ldots, q_{\mathbf{v_\infty}}^{\pm \underline\lambda_{d,\infty}}\right]$$
is non-zero at $(\alpha_{\mathbf{v}_{f}}, \alpha_{\mathbf{v}_\infty})$ for all $(\omega,\omega')\in W\times W$.
Here $\alpha_{\mathbf{v}_f}=(\alpha_{v_{w,w'}})_{(w,w')\in W\times W}$.
Finally, we define 
$$\mu_\sigma:=\prod_{(\omega,\omega')\in W\times W}T^{(\omega,\omega')}\in  \BC\left[q_{\mathbf{v}_f}^{\pm \underline\lambda_{1,f}}, \ldots, q_{\mathbf{v}_f}^{\pm \underline\lambda_{d,f}}, q_{\mathbf{v_\infty}}^{\pm \underline\lambda_{1,\infty}},\ldots, q_{\mathbf{v_\infty}}^{\pm \underline\lambda_{d,\infty}}\right]^{\underline{W}}.$$
Then $T$ annihilates $I_{P_M}^G(\sigma_\lambda)$ for all $\lambda\in \fa_M^*$, hence annihilates $L^2(G(F) \bs G(\BA)/K)_\fX$, but preserves $\pi$.
As there are only finitely many
$\fX = [(M,\sigma)]$ with $M \not= G$, a finite product of such $T$'s kills the orthogonal
space of the cuspidal spectrum $L_0^2(G(F) \bs G(\BA)/K)_\omega$ in $L^2(G(F) \bs G(\BA)/K)_\omega$, but does not kill $\pi$.

\paragraph{\bf Step 2: Isolating $\pi$} 
Recall that (Corollary \ref{cor of Harder}) there are only finitely many (equivalence classes of) cuspidal representations in the cuspidal spectrum
$L^2_0(G(F) \bs G(\BA)/K)$. 
Denote by $\pi_1,\ldots,\pi_n$ the cuspidal representations which
are not nearly equivalent to $\pi$. 
In particular, for $\pi_1$, there is a place $v_1$ of $F$ outside the union of $S$ and $\cup_{[(M,\sigma)]} S_{\sigma}$, such that $\pi_{1,v_1} \not\cong \pi_{v_1}$. 
Here $S_\sigma$ is a finite set of places such that $\mu_\sigma\in \CH_{S_{\sigma}}$, and $[(M, \sigma)]$ runs over all equivalence classes of cuspidal data.
It follows that we may find $T_{v_1} \in \CH_{v_1}$ such that
\[T_{v_1}(\chi_{\pi_{1,v_1}}) \not= T_{v_1}(\chi_{\pi_{v_1}}).\]
In particular, the Hecke element
\[T_{v_1} - T_{v_1}(\chi_{\pi_{1,v_1}}) \in \CH_{v_1}\]
kills $\pi_1$, but does not kill $\pi$. 
Continue this procedure for $\pi_2, \ldots, \pi_n$, we can construct a Hecke algebra element $\mu_0$ 
which kills all the cuspidal representations not nearly equivalent to $\pi$ in the spectrum, but does not kill $\pi$. 
Consider the finite product
\[\mu' = \mu_0 \cdot \prod_{[(M,\sigma)]} \mu_\sigma   \in \CH^{(S)}\]
where each $\mu_\sigma$ is constructed in {\bf Step 1} to kill $L^2(G(F) \bs G(\BA)/K)_{[(M,\sigma)]}$. Then $\mu'$ satisfies the
first condition in Theorem \ref{iso-cusp} which acts on $\pi^K$ by a non-zero constant. Finally, 
\[\mu = \pi(\mu')^{-1} \mu'\]
is a Hecke algebra element required in Theorem \ref{iso-cusp}.

\end{document}